
\documentclass[12pt,a4paper]{amsart}
\usepackage{amssymb,latexsym,amsmath,amscd,graphicx,url,color,comment}
\numberwithin{equation}{section}
\setlength{\parindent}{.4 in}
\setlength{\textwidth}{6.3 in}
\setlength{\topmargin} {-.3 in}
\setlength{\evensidemargin}{0 in}
\setlength{\oddsidemargin}{0 in}
\setlength{\footskip}{.3 in}
\setlength{\headheight}{.3 in}
\setlength{\textheight}{8.8 in}
\setlength{\parskip}{.1 in}
 
\let\oldlabel=\label
\def\prellabel{\marginparsep=1em\marginparwidth=44pt
    \def\label##1{\oldlabel{##1}\ifmmode\else\ifinner\else
         \marginpar{{\footnotesize\ \\ \tt
                    ##1}}\fi\fi}}


\theoremstyle{plain}
\newtheorem{thm}{Theorem}[section]
\newtheorem{prop}[thm]{Proposition}
\newtheorem{cor}[thm]{Corollary}
\newtheorem{lemma}[thm]{Lemma}

\theoremstyle{definition}
\newtheorem{defn}[thm]{Definition}
\newtheorem{rmk}[thm]{Remark}

\newtheorem{quest}[thm]{Question}
\newtheorem{ex}[thm]{Example}

\newcommand{\NN}{{\mathbb N}}
\newcommand{\PP}{{\mathbb P}}
\newcommand{\QQ}{{\mathbb Q}}

\newcommand{\ZZ}{{\mathbb Z}}

\newcommand{\FF}{{\mathbb F}}

\DeclareMathOperator{\Paths}{Paths}

\DeclareMathOperator{\CS}{CS}
\DeclareMathOperator{\Deg}{Deg}
\DeclareMathOperator{\HS}{HS}
\DeclareMathOperator{\HF}{HF}
\DeclareMathOperator{\GL}{GL}
\DeclareMathOperator{\GCD}{GCD}
\DeclareMathOperator{\LCM}{LCM}
\DeclareMathOperator{\gin}{gin}
\DeclareMathOperator{\inid}{in}

\DeclareMathOperator{\reg}{reg}

\DeclareMathOperator{\proj}{Proj}
\DeclareMathOperator{\ess}{Ess}

\hyphenation{Sturm-fels}

\title{Radical generic initial ideals}

\author{A. Conca}
\address{Dipartimento di Matematica, 
Universit\`a di Genova, Via Dodecaneso 35, 
I-16146 Genova, Italy}
\email{conca@dima.unige.it}

\author{E. De Negri}
\address{Dipartimento di Matematica, 
Universit\`a di Genova, Via Dodecaneso 35, 
I-16146 Genova, Italy}
\email{denegri@dima.unige.it}

\author{E. Gorla}
\address{Institut de Math\'ematiques, Universit\'e de Neuch\^atel, Rue Emile-Argand 11, CH-2000
  Neuch\^atel, Switzerland}  
\email{elisa.gorla@unine.ch}

\thanks{}

\subjclass[2010]{Primary 13C40, 13P10, 05E40. Secondary 14M99, 68T45}

\begin{document}

\begin{abstract}
In this paper, we survey the theory of Cartwright-Sturmfels ideals. These are $\ZZ^n$-graded ideals, whose multigraded generic initial ideal is radical. Cartwright-Sturmfels ideals have surprising properties, mostly stemming from the fact that their Hilbert scheme only contains one Borel-fixed point. This has consequences, e.g., on their universal Gr\"obner bases and on the family of their initial ideals. In this paper, we discuss several known classes of Cartwright-Sturmfels ideals and we find a new one. Among determinantal ideals of same-size minors of a matrix of variables and Schubert determinantal ideals, we are able to characterize those that are Cartwright-Sturmfels.
\end{abstract}

\maketitle

\section*{Introduction}
In 2010 Cartwright and Sturmfels published a paper \cite{CS} containing surprising results on certain multigraded ideals. More precisely, they proved that any $\ZZ^n$-multigraded ideal that has the $\ZZ^n$-multigraded Hilbert function of the ideal of $2$-minors of an $m\times n$ generic matrix must be radical and Cohen-Macaulay. During our stay at MSRI in 2012, we realised  that a similar phenomenon was related to the  universal Gr\"obner basis theorem for maximal minors proved in the nineties  by Bernstein, Sturmfels, and Zelevisky \cite{SZ,BZ}. We managed to identify a  notion that ``explains"  the common features behind  these two settings and that is flexible enough to be useful in other contexts. The key idea is to consider the family of multigraded ideals with radical multigraded generic ideals, that we named after Cartwright and Sturmfels. We wrote four papers related to the subject \cite{CDG1,CDG3,CDG4,CDG2}. The goal of this note is to give a short introduction to Cartwright-Sturmfels ideals, to highlight their properties, and to present some classes of Cartwright-Sturmfels ideals, both old and new. In particular,  in Section \ref{DetCS} we classify determinantal ideals that are Cartwright-Sturmfels in the generic case and derive results for the non-generic case. In Section \ref{sect:schubert} we characterize Schubert determinantal ideals that are Cartwright-Sturmfels. 
In Section \ref{BinCS}  we take the occasion to correct a mistake in the proof of Theorem 2.1  of \cite{CDG3} asserting that any binomial edge ideals is Cartwright-Sturmfels. Finally, in Section \ref{ClosureCS} we recall another result  from \cite{CDG3} asserting that the multiprojective closure of any linear ideal is Cartwright-Sturmfels and conclude with a question suggested by it.  

{\bf Acknowledgements:} The authors thank Anna Weigandt for useful discussions on the material of this paper.

The first and the second authors were partially supported by GNSAGA-INdAM.

\section{Multigraded generic initial ideals and Multidegree}\label{multidegreeStructure}
Let  $n\in\NN_+$ and $m_1,\ldots,m_n\in\NN$. 
Let $S=K[x_{ij}\mid 1\leq j\leq n,\ 0\leq i\leq m_j]$ be a polynomial ring over a field $K$ endowed with the standard $\ZZ^n$-grading induced by setting $\deg(x_{ij})=e_j$, where $e_j\in\ZZ^n$ is the $j$-th standard basis vector. 

We will deal with  $\ZZ^n$-graded ideals and modules of $S$. We use the words $\ZZ^n$-graded and multigraded interchangeably. 
For simplicity we always assume the term orders on $S$ satisfy $x_{0j}>x_{1j}>\ldots>x_{m_jj}$ for all $j=1,\ldots,n$.

The ring $S$ may be thought of as 
the coordinate ring of the product of $n$ projective spaces, i.e. 
$$\proj(S)=\PP^{(m_1,\dots,m_n)}=\PP^{m_1}\times\ldots\times\PP^{m_n}.$$
A  multigraded prime ideal $P$  of $S$ is relevant if $P$ does not contain $S_{(1,1,\dots,1)}$ and irrelevant otherwise. When $K$ is algebraically closed relevant prime ideals correspond to irreducible subvarieties of $\PP^{(m_1,\dots,m_n)}$. 
\subsection{The multigin}  
The group $G=\GL_{m_1+1}(K)\times\cdots\times\GL_{m_n+1}(K)$ acts naturally on $S$ as the group of multigraded $K$-algebra automorphisms,  i.e., coordinate changes that fix each factor in the product of projective spaces. Let $I$ be   a multigraded ideal of $S$ and let $\sigma$ be a term order  on $S$.   As in  the standard $\ZZ$-graded situation, if $K$ is infinite  there exists a nonempty Zariski open $U\subseteq G$ such that $\inid_\sigma(gI)=\inid_\sigma(g^\prime I)$ for all $g,g^\prime\in U$. This leads  to the definition of multigraded generic initial ideal. We refer the   reader to~\cite[Theorem~15.23]{E} for details on the generic initial ideals in the $\ZZ$-graded case and to~\cite[Section~1]{ACD} for a similar discussion in the $\ZZ^n$-graded case.

\begin{defn}
The  multigraded generic initial ideal $\gin_\sigma(I)$ of $I$ with respect to $\sigma$ is the ideal 
$\inid_\sigma(gI)$, where $g$ is a generic multigraded coordinate change, i.e. $g\in U$ and $U$ is a nonempty Zariski open subset of $G$. 
\end{defn}

Let $B=B_{m_1+1}(K)\times \cdots  \times B_{m_n+1}(K)$ be the {\em Borel
  subgroup} of $G$, consisting of the upper triangular invertible
matrices in $G$. One knows  that $\gin_\sigma(I)$ is Borel fixed, that is, it is fixed  by the action of every $g\in B$.

\subsection{Multidegree and dual multidegree}  

For a finitely generated $\ZZ^n$-graded module $M=\oplus_{a\in \ZZ^n} M_a$ over a standard $\ZZ^n$-graded polynomial ring $S$, one may define the multigraded Hilbert function as the function $\HF(M,-)$ that associates to $a=(a_1,\dots, a_n) \in \ZZ^n$ the number  $\HF(M,a)=\dim_K M_a$. As in the $\ZZ$-graded case, for  $a\gg 0$ the multigraded Hilbert function  agrees with a polynomial in $n$ variables $P_M(Z)=P_M(Z_1,\dots, Z_n)$,  the multigraded Hilbert polynomial of $M$. 

Let $d(M)$ be the total degree of $P_M(Z)$.  Under mild assumptions, for example when  all the minimal primes of $M$ are relevant,  one has  that $d(M)=\dim(M)-n$. The homogeneous component of degree $d(M)$ of $P_M(Z)$  can be written as 
$$\sum_{} \frac{e_{b}(M)}{b_1!b_2!\cdots b_n!} Z_1^{b_1}\cdots Z_n^{b_n},$$
where the sum ranges over all $b\in \NN^n$ with $b_i\leq m_i$ such that $|b|=d(M)$. The numbers $e_{b}(M)$ are the multidegrees (or mixed multiplicities) of $M$. It turns out that they are non-negative integers. The multidegree of $M$ is the polynomial
$$\Deg_M(Z_1,\dots, Z_n)=\sum_{}  e_{b}(M)   Z_1^{b_1}\cdots Z_n^{b_n},$$
where the sum is over all $b\in \NN^n$ such that $|b|=d(M)$.  

One can regard $M$ as a $\ZZ$-graded module by $M_v=\oplus_{|a|=v} M_a$. With respect to this  $\ZZ$-grading, $M$ has an ordinary multiplicity $e(M)$ and, if  all the minimal primes of $M$ are relevant, one has 
\begin{equation}\label{sum-mm}
e(M)=\sum_{b} e_b(M)
\end{equation}
where the sum ranges over all the $b\in \NN^n$ such that $|b|=\dim(M)-n$. This is proved in \cite[Theorem 2.8]{CR}, but a special case appears already in \cite{vW2}. 

When $M$ is the coordinate ring of an irreducible  multiprojective variety $X \subseteq \PP^{(m_1,\dots,m_n)}$ over an algebraically closed field, the multidegrees $e_{b}(M)$'s  have a geometric interpretation. Indeed, in that case $e_{b}(M)$ is the number of points of $\PP^{(m_1,\dots,m_n)}$ that one gets by intersecting $X$ with $L_1\times L_2 \times \dots \times L_n$ where each $L_i$ is a general linear subspace of $\PP^{m_i}$ of codimension $b_i$.  

\begin{defn}   
A $\ZZ^n$-graded module $M$ has a multiplicity-free multidegree if $e_b(M)\in \{0, 1\}$  for all $b$  with $|b| = d(M)$. 
\end{defn} 

The relevant prime ideals $P$ of $S$ such that $S/P$ have a multiplicity-free multidegree are studied in \cite{B} by Brion (in a more general setting) who proves in particular that $S/P$ is Cohen-Macaulay.  

The word multidegree is used in the literature also to refer to another polynomial invariant of $M$, which we call dual multidegree throughout this paper, in order to avoid confusion. The dual multidegree is defined as follows:
The multigraded Hilbert series of $M$ is
$$\HS(M,Z)=\sum_{a\in \ZZ^n} (\dim_K M_a)  Z^a\in\QQ[[Z_1,\dots,Z_n]][Z_1^{-1},\dots, Z_n^{-1}].$$ 
Let $$K_M(Z)= \HS(M,Z)\prod_{i=1}^n (1-Z_i)^{m_i+1}.$$
It turns out that $K_M(Z) \in \ZZ[Z_1^{\pm 1} ,\dots,Z_n^{\pm 1}]$. The dual multidegree $\Deg^*_M(Z)$ of $M$ is the homogeneous component of smallest total degree of $K_M(1-Z_1,\dots,1-Z_n)$. One can show that $\Deg^*_M(Z)\in \NN[Z_1,\dots,Z_n]$. Notice that the dual multidegree corresponds to the multidegree as defined in e.g. \cite{KM,MS}.

\subsection{Multidegrees of  radical monomial ideals} 

Let $J$ be  a  radical  monomial ideal of $S$ with associated simplicial complex 
$\Delta \subseteq  2^{T}$. Here $T=\{ (i,j) :  1\leq j\leq n,\  0\leq i\leq m_j \}$. 
The ideal $J$ is naturally $\ZZ^{|T|}$-graded, however here we consider its $\ZZ^n$-graded structure and describe its multigraded Hilbert polynomial in terms of $\Delta$. For each $F\in \Delta$ and $j\in [n]$ we set  
$$c_j(F)=|\{ (0,j), \dots, (m_j,j)\} \cap F|$$  
 and  $c(F)=(c_1(F),\dots, c_n(F))\in \NN^n$. A face $F$ is relevant if the corresponding prime ideal $(x_{ij}  : (i,j) \not\in F)$ is relevant,  i.e., $c_j(F)>0$ for every $j=1,\dots, n$. Let us denote by $R(\Delta)$ the set of the relevant faces of $\Delta$, i.e., 
 $$R(\Delta)=\{ F \in \Delta : c_j(F)>0 \mbox{ for all }  j\in [n]\}.$$
 
\begin{lemma} 
\label{multiHF-easy} 
For every $a=(a_1,\dots, a_n)\in \NN_+^n$ one has 
$$\HF(S/J,a)=\sum_{F\in R(\Delta)} \prod_{j=1}^n\binom{a_j-1}{c_j(F)-1},$$
in particular
$$P_{S/J}(Z_1,\dots, Z_n)=\sum_{F\in R(\Delta) }  \prod_{j=1}^n \binom{Z_j-1}{c_j(F)-1}.$$ 
\end{lemma}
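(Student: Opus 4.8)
The plan is to use the explicit $K$-basis of the Stanley--Reisner ring $S/J=K[\Delta]$: the residue classes of the monomials $\prod_{(i,j)}x_{ij}^{a_{ij}}$ whose support $\{(i,j):a_{ij}>0\}$ is a face of $\Delta$ form a $K$-basis of $S/J$, and this basis is $\ZZ^n$-homogeneous. Partitioning it according to the unique face $F\in\Delta$ that is the support of a given monomial yields a decomposition of $S/J$ as a $\ZZ^n$-graded $K$-vector space indexed by the faces of $\Delta$; hence $\HF(S/J,a)$ equals the sum over all $F\in\Delta$ of the number of monomials of multidegree $a$ with support exactly $F$.

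Next I would compute, for a fixed face $F$ and a fixed $a=(a_1,\dots,a_n)$, the number of monomials supported exactly on $F$ of multidegree $a$. Since $\deg x_{ij}=e_j$, such a monomial is a product over $j\in[n]$ of a monomial in the variables $\{x_{ij}:(i,j)\in F\}$ of ordinary degree $a_j$ with all exponents $\geq 1$, and these $n$ factors are chosen independently. The number of monomials of degree $d$ in $c$ fixed variables with all exponents positive is the number of compositions of $d$ into $c$ positive parts, namely $\binom{d-1}{c-1}$ when $c\geq 1$ (stars and bars), while it is $0$ if $c=0<d$ and $1$ if $c=d=0$. Therefore the contribution of $F$ is $\prod_{j=1}^n\binom{a_j-1}{c_j(F)-1}$ when $c_j(F)\geq 1$ for all $j$, and it vanishes as soon as $a\in\NN_+^n$ and $c_j(F)=0$ for some $j$. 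Summing over $F$ and discarding the vanishing terms gives exactly $\HF(S/J,a)=\sum_{F\in R(\Delta)}\prod_{j=1}^n\binom{a_j-1}{c_j(F)-1}$ for every $a\in\NN_+^n$.

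For the last assertion I would note that, with the convention $\binom{Z_j-1}{0}=1$, the function $a\mapsto\prod_{j}\binom{a_j-1}{c_j(F)-1}$ is the restriction to $\NN_+^n$ of the polynomial $\prod_{j}\binom{Z_j-1}{c_j(F)-1}\in\QQ[Z_1,\dots,Z_n]$, of total degree $\sum_j(c_j(F)-1)$. Since $R(\Delta)$ is finite, $\sum_{F\in R(\Delta)}\prod_j\binom{Z_j-1}{c_j(F)-1}$ is a polynomial agreeing with $\HF(S/J,-)$ on all of $\NN_+^n$, in particular for $a\gg 0$, so by uniqueness of the multigraded Hilbert polynomial it equals $P_{S/J}$. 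The argument is essentially bookkeeping; the only points requiring care are the degenerate cases $c_j(F)\in\{0,1\}$ and the fact that the identity is asserted on $\NN_+^n$ rather than on all of $\NN^n$ (which is nonetheless enough to pin down $P_{S/J}$). I do not expect a genuine obstacle here.
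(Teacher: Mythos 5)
Your proof is correct and follows essentially the same route as the paper: partition the monomials of multidegree $a$ not lying in $J$ by their support $F\in\Delta$, observe that for $a\in\NN_+^n$ only relevant faces contribute, and count the monomials supported exactly on $F$ by stars and bars to get $\prod_j\binom{a_j-1}{c_j(F)-1}$. Your extra care with the degenerate cases $c_j(F)\in\{0,1\}$ and the passage from the Hilbert function identity on $\NN_+^n$ to the Hilbert polynomial is a welcome precision but not a departure from the paper's argument.
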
 

\begin{proof} 
First we observe that $\HF(S/J,a)$ is the number of monomials in $S$ of multidegree $a$ which are not contained in $J$. To a monomial  $x^v=\prod x_{ij}^{v_{ij}}$ we may associate its support $F(x^v)=\{ (i,j)  : v_{ij}>0\}$. Since $a_i>0$ for all $i$, by construction we have $x^v\not\in J$ and $\deg(x^v)=a$ if and only if $F(x^v)\in R(\Delta)$.  We partition the set of monomials of degree $a$ which do not belong $J$ according to their support.  
The monomials of degree $a$ supported on a given $F\in R(\Delta)$ have the form $(\prod_{(i,j)\in F} x_{ij} )x^v$, where $x^v$ is a monomial with support contained in $F$ and degree $a-c(F)$. The number of these monomials is $\prod_{j=1}^n \binom{a_j-1}{c_j(F)-1}$. 
\end{proof} 
   
Denote by $\FF(\Delta)$ the set of the facets of $\Delta$.  Recall that  $\Delta$ is a pure simplicial complex  if all the  facets of $\Delta$ have the same dimension. As an immediate corollary we have
   
\begin{lemma}\label{multig-easy1}  
Assume that $\Delta$ is a pure simplicial complex and that $\FF(\Delta) \cap  R(\Delta) \neq  \emptyset$. Then 
$$\Deg_{S/J}(Z_1,\dots, Z_n)=\sum_{F\in \FF(\Delta) \cap  R(\Delta)} Z_1^{c_1(F)-1}\cdots Z_n^{c_n(F)-1}.$$
\end{lemma}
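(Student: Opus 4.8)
The statement is an immediate consequence of Lemma~\ref{multiHF-easy}, so the proof is a matter of extracting the leading-degree part of the Hilbert polynomial computed there. First I would recall from Lemma~\ref{multiHF-easy} that
$$P_{S/J}(Z_1,\dots,Z_n)=\sum_{F\in R(\Delta)}\prod_{j=1}^n\binom{Z_j-1}{c_j(F)-1},$$
and observe that each summand $\prod_{j=1}^n\binom{Z_j-1}{c_j(F)-1}$ is a polynomial in $Z_1,\dots,Z_n$ of total degree $\sum_{j=1}^n(c_j(F)-1)=|F|-n$, with leading form (the component of top total degree)
$$\prod_{j=1}^n\frac{Z_j^{c_j(F)-1}}{(c_j(F)-1)!}.$$
Thus $d(S/J)$ equals the maximum of $|F|-n$ over $F\in R(\Delta)$, and the degree-$d(S/J)$ homogeneous part of $P_{S/J}$ is obtained by summing the leading forms of exactly those summands whose index $F$ attains this maximum.

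Next I would use the purity hypothesis: since $\Delta$ is pure, every facet has the same dimension, hence the same cardinality, say $|F|=d+n$ for all $F\in\FF(\Delta)$; and every face is contained in some facet, so $|F|\le d+n$ for all $F\in\Delta$, with equality precisely when $F$ is a facet. Combined with the assumption $\FF(\Delta)\cap R(\Delta)\ne\emptyset$, this shows that the maximum of $|F|-n$ over $R(\Delta)$ is $d$, attained exactly on the set $\FF(\Delta)\cap R(\Delta)$ (a facet that is relevant contributes $|F|-n=d$; a non-facet relevant face contributes strictly less). Therefore the degree-$d$ part of $P_{S/J}$ is $\sum_{F\in\FF(\Delta)\cap R(\Delta)}\prod_{j=1}^n Z_j^{c_j(F)-1}/(c_j(F)-1)!$, and comparing with the definition of $\Deg_{S/J}$ — where the coefficient of $Z_1^{b_1}\cdots Z_n^{b_n}$ is $b_1!\cdots b_n!$ times the corresponding coefficient of the leading form — the factorials cancel and we get exactly $\sum_{F\in\FF(\Delta)\cap R(\Delta)} Z_1^{c_1(F)-1}\cdots Z_n^{c_n(F)-1}$, as claimed.

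There is essentially no obstacle here; the only point requiring a little care is to make sure no unwanted cancellation occurs among the leading forms, i.e.\ that the stated sum is genuinely the multidegree with the correct (nonnegative, in fact $0/1$) coefficients. This is clear because all the leading forms $\prod_j Z_j^{c_j(F)-1}/(c_j(F)-1)!$ have strictly positive coefficients, so no cancellation can happen, and moreover distinct facets $F,F'$ with the same $c(F)=c(F')$ contribute to the same monomial, which a priori would give a coefficient larger than one — but one should note that the coefficient of $Z^b$ in $\Deg_{S/J}$ equals $e_b(S/J)$, which counts \emph{all} such facets, so in the (genuinely multiplicity-free) formula of the statement each monomial appears once per facet, matching the displayed sum indexed over facets. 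Thus the formula holds verbatim as stated, and this completes the proof.
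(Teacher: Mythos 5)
Your argument is correct and is exactly the derivation the paper intends: Lemma~\ref{multig-easy1} is stated there as an immediate corollary of Lemma~\ref{multiHF-easy}, obtained by extracting the top-degree homogeneous part of the Hilbert polynomial just as you do, with purity ensuring that the maximum of $|F|-n$ over $R(\Delta)$ is attained exactly on $\FF(\Delta)\cap R(\Delta)$ and positivity of the leading forms ruling out cancellation. The only quibble is your parenthetical ``(genuinely multiplicity-free)'': the displayed sum need not be multiplicity-free, since distinct facets with the same vector $c(F)$ produce a coefficient larger than one, but as you yourself note this is consistent because $e_b(S/J)$ counts all such facets.
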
 


\section{Cartwright-Sturmfels ideals}\label{SectCS}

In this section we recall the definition of Cartwright-Sturmfels ideals and some facts about them, which were discussed in our papers \cite{CDG1,CDG3,CDG4,CDG2}. 

\begin{defn}
A multigraded ideal $I$ of $S$ is a Cartwright-Sturmfels ideal if it has a radical multigraded generic initial ideal.
\end{defn}

We denote by $\CS(S)$, or simply by $\CS$ if $S$ is clear from the context, the family of Cartwright-Sturmfels ideals of $S$. 

\begin{ex}\label{ex1} 
The $\ZZ$-graded Cartwright-Sturmfels ideals are exactly those generated by linear forms. In fact, if $I$ is not generated by   linear forms, let $d>1$ be the least degree of a minimal generator of $I$ which is not linear. Then the generic initial ideal of $I$ has a minimal generator which is the $d$-th power of a variable. In particular, the generic initial ideal of $I$ with respect to any term order is not radical. 
\end{ex}

Notice that the property of being Cartwright-Sturmfels depends on the multigrading.

\begin{ex}
If $I\subseteq S$ is  generated by  a non-zero element of degree $(1,1,\dots, 1)\in \ZZ^n$ then $I$ is a $\ZZ^n$-graded Cartwright-Sturmfels ideal for obvious reasons. However, as we have observed in Example \ref{ex1}, the ideal $I$ is not a $\ZZ$-graded Cartwright-Sturmfels ideal if $n>1$. 
\end{ex}

Cartwright-Sturmfels ideals have many interesting properties. The next proposition summarizes some of them. 




\begin{prop}\label{oldprop}
Let $I\in \CS$ and let $J$ be a radical Borel fixed ideal such that $\HF(I,a)=\HF(J,a)$ for all $a\in \NN^n$. Then:
\begin{enumerate}
\item  $I$ is radical and $\gin_\tau(I)=J$ for every term order $\tau$. 
\item  $\inid_\tau(I)\in \CS$, in particular it is square free, for every term order $\tau$. 
 \item $\reg(I)\leq n$.
 \item If $K$ is algebraically closed, then $P\in \CS$ for every minimal prime $P$ of $I$. 
\item  $I$ is generated by elements of multidegree $\leq (1,\ldots, 1)$.
\item All reduced Gr\"obner bases of $I$ consist of elements of multidegree $\leq (1,\ldots, 1)$.\\ In particular, $I$ has a universal  Gr\"obner basis of elements of multidegree $\leq (1,\ldots, 1)$. 
\end{enumerate}
\end{prop}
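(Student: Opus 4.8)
The plan is to reduce everything to one structural input — that a radical Borel fixed $\ZZ^n$-graded ideal is the \emph{unique} Borel fixed ideal with its multigraded Hilbert function — together with routine semicontinuity properties of the passage $I\rightsquigarrow\inid_\tau(I)$. Granting the uniqueness, item (1) is immediate: for any term order $\tau$ the ideal $\gin_\tau(I)$ is Borel fixed and satisfies $\HF(\gin_\tau(I),a)=\HF(I,a)=\HF(J,a)$ for all $a\in\NN^n$, hence $\gin_\tau(I)=J$; the hypothesis $I\in\CS$ is used only to know that the radical Borel fixed model $J$ exists. That $I$ itself is radical then follows by degenerating $gI\cong I$ to $\inid_\tau(gI)=J$ for generic $g$: since $J$ is a squarefree monomial ideal, $S/J$ is geometrically reduced, being geometrically reduced is an open condition on the base of the flat family interpolating between $S/gI$ and $S/J$, so the general fibre $S/I$ is reduced as well.

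For (2), I would apply (1) to the monomial ideal $\inid_\tau(I)$: it has the same Hilbert function as $I$, hence as $J$, so every multigraded gin of $\inid_\tau(I)$ equals $J$; thus $\inid_\tau(I)\in\CS$ and, by (1) applied to it, it is radical, i.e.\ squarefree.

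Items (3), (5) and (6) rest on the structure of a radical — hence squarefree — Borel fixed ideal $J$: every minimal monomial generator $u$ of $J$ has multidegree $\leq(1,\ldots,1)$. Indeed, if $x_{i_1j}$ and $x_{i_2j}$ with $i_1<i_2$ both divided $u$, then (as $x_{i_2j}$ divides $u$ only to the first power, the Borel move $x_{i_2j}\mapsto x_{i_1j}$ is forced in every characteristic) $x_{i_1j}\,u/x_{i_2j}\in J$, and squarefreeness forces the squarefree monomial supported on $\Supp(u)\setminus\{(i_2,j)\}\subsetneq\Supp(u)$ into $J$, contradicting minimality of $u$. From this: (3) the regularity of the Borel fixed ideal $J$ equals its largest generator degree, which is $\leq n$ since a multidegree $\leq(1,\ldots,1)$ has total degree $\leq n$; and $\reg(I)=\reg(gI)\leq\reg(\inid_\tau(gI))=\reg(J)\leq n$ for generic $g$, since initial ideals only raise regularity and $g$ is a graded automorphism. (5) the $\ZZ^n$-graded Betti numbers satisfy $\beta_{0,a}(I)\leq\beta_{0,a}(\gin_\tau(I))=\beta_{0,a}(J)$, which vanishes unless $a\leq(1,\ldots,1)$, so $I$ is generated in those multidegrees. (6) apply (5) to $\inid_\tau(I)\in\CS$: its minimal monomial generators — which are exactly the leading terms of the reduced Gr\"obner basis of $I$ with respect to $\tau$ — have multidegree $\leq(1,\ldots,1)$; as the elements of a reduced Gr\"obner basis of a multigraded ideal are themselves multigraded, each has multidegree $\leq(1,\ldots,1)$, and the union of the finitely many reduced Gr\"obner bases is a universal one with the same property.

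Item (4) is the one I expect to be the real obstacle. The tool is the interaction between $\CS$ and multidegrees: $I\in\CS$ forces $\Deg_{S/I}=\Deg_{S/J}$ to be multiplicity-free, because the multidegree of a radical Borel fixed ideal is multiplicity-free (distinct top-dimensional relevant facets of the associated complex carry distinct $c$-vectors). Over $K=\overline{K}$ one has the additivity formula $\Deg_{S/I}=\sum_P\Deg_{S/P}$, with $P$ ranging over the minimal primes of $I$ of maximal dimension; multiplicity-freeness of the left-hand side and non-negativity of the summands force each $\Deg_{S/P}$ to be multiplicity-free, whence $P\in\CS$, since a relevant prime is Cartwright-Sturmfels exactly when $S/P$ has multiplicity-free multidegree. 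The minimal primes of smaller dimension, and any irrelevant ones, fall outside the additivity formula and require a separate argument — for instance by passing to the quotient by a general linear form in a suitably chosen factor, an operation that preserves membership in $\CS$, and inducting on the dimension. Making this last reduction precise is the delicate point; the rest of the proposition is a direct assembly of the uniqueness theorem, the structure of squarefree Borel fixed ideals, and standard semicontinuity of initial ideals.
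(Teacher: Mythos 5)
The paper states this proposition without proof, as a summary of results from \cite{CDG1,CDG3,CDG4,CDG2}, so there is no in-paper argument to compare yours with; I can only judge the proposal on its own merits. You correctly identify the engine --- that a radical Borel-fixed $\ZZ^n$-graded ideal is the \emph{only} Borel-fixed ideal with its multigraded Hilbert function --- and, granting it, your derivations of (1), (2), (5) and (6) are essentially complete and correct; in particular the observation that a minimal generator of a squarefree Borel-fixed ideal involves at most one variable from each block, with the correct remark that the exponent-one exchange is valid in every characteristic, is exactly the right structural point. But that rigidity statement is the hard theorem here, not a ``structural input'' one may grant: it is where the content of the whole theory lives, and your proposal does not touch it. A smaller wobble in (1): openness of the geometrically-reduced locus in a flat family is a statement about proper morphisms and does not apply as stated to the affine Gr\"obner degeneration; the standard (and easier) route is that a radical initial ideal forces the ideal to be radical --- if $f^k\in I$ with $f\notin I$, reduce $f$ to its nonzero normal form $r$, then $\inid_\tau(r)^k\in\inid_\tau(I)=J$ and radicality of $J$ gives $\inid_\tau(r)\in J$, a contradiction.

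There are two genuine gaps. For (3), ``the regularity of a Borel-fixed ideal equals its largest generator degree'' is the Eliahou--Kervaire/Bayer--Stillman statement for strongly stable ideals and fails for Borel-fixed ideals in positive characteristic (e.g.\ $(x^p,y^p)$ is Borel-fixed, generated in degree $p$, of regularity $2p-1$); for the radical Borel-fixed $J$ at hand the conclusion $\reg(J)\leq n$ is true, but it requires the structure theory of radical Borel-fixed multigraded ideals (an Eliahou--Kervaire-type resolution or linear quotients in the multigraded setting), which your one line does not supply. For (4) you cover only the relevant minimal primes of maximal dimension: the additivity formula for the multidegree is blind to lower-dimensional and to irrelevant components, so, as you yourself acknowledge, those cases are simply not proved; ``pass to a general linear form and induct'' is a plan, not an argument, and this is precisely the part of the statement that needs a different mechanism (e.g.\ a careful induction using the closure operations of Proposition \ref{closures}, or a component-by-component analysis of the Gr\"obner degeneration). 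You also assert without proof that the multidegree of a radical Borel-fixed ideal is multiplicity-free (distinct top-dimensional relevant facets carry distinct $c$-vectors); that is the content of Proposition \ref{oldprop2}(1) and is another input your argument consumes but does not establish.
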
 

Moreover one has

\begin{prop}\label{oldprop1}
Let $I$ be a multigraded ideal of $S$. Then $I\in\CS$ if and only if there exists a radical Borel-fixed multigraded ideal which has the same multigraded Hilbert series as $I$.
\end{prop}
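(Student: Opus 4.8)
The plan is to prove Proposition~\ref{oldprop1} by establishing both implications separately, leveraging the structural results already collected in Proposition~\ref{oldprop}.

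\textbf{The ``only if'' direction.} Suppose $I\in\CS$. Then by definition $\gin_\sigma(I)$ is a radical multigraded ideal for some (equivalently, by part~(1) of Proposition~\ref{oldprop}, every) term order $\sigma$. Moreover $\gin_\sigma(I)$ is Borel-fixed, as recalled in Section~\ref{multidegreeStructure}. Finally, passing to an initial ideal preserves the multigraded Hilbert function, and so does applying a coordinate change $g\in G$ (since $g$ acts by multigraded automorphisms); hence $\HS(S/\gin_\sigma(I),Z)=\HS(S/I,Z)$, i.e. $\gin_\sigma(I)$ and $I$ have the same multigraded Hilbert series. So $J:=\gin_\sigma(I)$ is the desired radical Borel-fixed ideal. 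This direction is essentially immediate from the definitions.

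\textbf{The ``if'' direction.} Suppose there exists a radical Borel-fixed multigraded ideal $J$ with $\HS(S/J,Z)=\HS(S/I,Z)$; we must show $I\in\CS$. Fix a term order $\sigma$ and consider $J':=\gin_\sigma(I)$, which is Borel-fixed and has the same multigraded Hilbert series as $I$, hence the same multigraded Hilbert series as $J$. The crux is then to show that a Borel-fixed multigraded ideal is uniquely determined by its multigraded Hilbert series among radical Borel-fixed ideals---or more precisely, that if two Borel-fixed ideals $J$ and $J'$ have the same multigraded Hilbert series and $J$ is radical, then $J=J'$ (so in particular $J'$ is radical, giving $I\in\CS$). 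Equivalently, one wants: the radical $\sqrt{J'}$ is Borel-fixed with the same multigraded Hilbert series as $J$, and a radical Borel-fixed ideal is determined by its multigraded Hilbert series, so $\sqrt{J'}=J$; but then $J'$ and $\sqrt{J'}$ have the same Hilbert series, forcing $J'=\sqrt{J'}$. The key inputs are: (i) the radical of a Borel-fixed monomial ideal is again Borel-fixed (a standard fact, since the radical of a monomial ideal is monomial and being Borel-fixed in characteristic-free form is about the combinatorial shadow); (ii) the statement that a \emph{radical} Borel-fixed multigraded ideal is uniquely determined by its multigraded Hilbert function. Point~(ii) is where the real content lies, and it is presumably the substance of the ``Cartwright-Sturmfels'' theory already developed in \cite{CDG1,CDG4}: radical Borel-fixed ideals correspond to certain combinatorial data (simplicial complexes, via the description in Lemma~\ref{multiHF-easy}) and the multigraded Hilbert function, expressed through the binomial sums $\sum_{F\in R(\Delta)}\prod_j\binom{a_j-1}{c_j(F)-1}$, recovers the generating (relevant) faces.

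\textbf{Expected main obstacle.} The delicate point is the uniqueness statement in part~(ii): showing that the multigraded Hilbert function of a radical Borel-fixed ideal $J=I_\Delta$ determines $\Delta$, at least on its relevant faces. From Lemma~\ref{multiHF-easy} we have $P_{S/J}(Z)=\sum_{F\in R(\Delta)}\prod_{j=1}^n\binom{Z_j-1}{c_j(F)-1}$, and the task is to invert this: to recover the multiset $\{c(F):F\in R(\Delta)\}$, and then to use Borel-fixedness to pin down which faces $F$ actually occur (Borel-fixedness should force $R(\Delta)$ to be determined by the multiset of $c$-vectors of its maximal elements, via a shifting/compression argument). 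I would handle this by first extracting the top-dimensional part (the multidegree, via Lemma~\ref{multig-easy1}) and inducting downward on dimension, peeling off facets; the Borel-fixed hypothesis is what makes each graded piece ``as compressed as possible,'' so that the numerics determine the combinatorics. If this uniqueness is cited from \cite{CDG1} or \cite{CDG4} rather than reproved, the proof becomes short; in a survey, I would state it as the key lemma and give the reference, then assemble the two implications as above.
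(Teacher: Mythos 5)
The paper itself gives no proof of this proposition --- Section~\ref{SectCS} merely recalls it from \cite{CDG1,CDG4} --- so the comparison is with the argument in those references. Your overall architecture is the right one: the ``only if'' direction is indeed immediate ($\gin_\sigma(I)$ is Borel-fixed and Hilbert-function-preserving, and is radical by hypothesis), and the ``if'' direction does reduce to the rigidity statement you first formulate: \emph{if $J$ and $J'$ are Borel-fixed multigraded ideals with the same multigraded Hilbert function and $J$ is radical, then $J=J'$.} That lemma (essentially \cite[Lemma 1.4 / Theorem 1.6]{CDG4}) is the entire content of the proposition, and citing it would close the proof.

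The gap is that neither of your two proposed justifications establishes that lemma. The route through $\sqrt{J'}$ is circular: you assert that $\sqrt{J'}$ has the same multigraded Hilbert series as $J$, but since $\sqrt{J'}\supseteq J'$ and $\HF(S/J')=\HF(S/J)$, the equality $\HF(S/\sqrt{J'})=\HF(S/J)$ holds if and only if $\sqrt{J'}=J'$, i.e.\ if and only if $J'$ is radical --- which is exactly what you are trying to prove. (The auxiliary fact (i) is fine, since radicals commute with automorphisms, but it does not help here.) The second route --- recovering $\Delta$ from the Hilbert function via Lemma~\ref{multiHF-easy} and peeling off facets --- at best proves uniqueness of a radical Borel-fixed ideal with a given Hilbert function; it says nothing about why a \emph{non-radical} Borel-fixed ideal $J'$ cannot share that Hilbert function, which is the hard half of the rigidity. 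The actual proof in \cite{CDG4} does not go through the simplicial complex at all: it exploits the explicit structure of $\ZZ^n$-graded radical Borel-fixed ideals (minimal generators involve at most one variable per block and are closed under moving up within each block, because a generator divisible by $x_{ij}x_{i'j}$ would force a non-squarefree monomial into the ideal) together with an induction on the grading, using colon and sum operations with variables. So as written the ``if'' direction is not proved; it is correctly reduced to a lemma that still needs either a genuine proof or an explicit citation.
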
 

The family $\CS$ is closed under some natural operations.

\begin{prop}\label{closures}
Let $L$ be a $\ZZ^n$-graded  linear form of $S$. In the following  $S/(L)$ is identified with a polynomial ring with the induced $\ZZ^n$-graded structure.
Let $U_i\subseteq S_{e_i}$ be vector subspaces for all $i=1,\ldots,n$ and let $R=K[U_1,\ldots,U_n]$ be the $\ZZ^n$-graded polynomial subring of $S$ that they generate. 
Then:
\begin{enumerate}
\item If $I\in \CS(S)$, then $I:L\in \CS(S)$. 
\item If $I\in \CS(S)$, then  $I+(L)\in \CS(S)$ and $I+(L)/(L)\in \CS(S/(L))$.
\item If $I\in \CS(S)$, then $I\cap R\in \CS(R)$. 
\end{enumerate}
\end{prop}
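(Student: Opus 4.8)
The plan is to reduce all three statements to the characterization in Proposition \ref{oldprop1}: a multigraded ideal is Cartwright-Sturmfels if and only if it admits a radical Borel-fixed ideal with the same multigraded Hilbert series. So in each case it suffices to produce such a companion for the new ideal, starting from one $J$ for $I$. For (2), I would first replace $L$ by a generic linear form: since $I \in \CS(S)$, Proposition \ref{oldprop}(5)--(6) tells us that reduced Gr\"obner bases of $I$ live in multidegree $\le(1,\dots,1)$, and one checks that after a generic multigraded change of coordinates $g$ (which does not change $\gin$), the image $gL$ can be taken to be a single variable, say $x_{0i}$ for the relevant index $i$. Then $\inid_\sigma(gI) + (x_{0i})$ is a monomial ideal with the same Hilbert series as $I + (L)$ — here one uses that $L$ is a nonzerodivisor modulo a suitable initial object, or more robustly that passing to a generic $L$ and then to the initial ideal commutes appropriately — and it is radical and Borel-fixed, hence $I + (L) \in \CS(S)$. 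Quotienting out the variable $x_{0i}$ identifies $S/(L)$ with a smaller polynomial ring $S'$ carrying the induced $\ZZ^n$-grading, and the image of $\gin(I)+(x_{0i})$ in $S'$ is again radical and Borel-fixed with the right Hilbert series, giving $I+(L)/(L) \in \CS(S/(L))$.

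For (1), the key identity is the short exact sequence $0 \to S/(I:L)(-e_i) \xrightarrow{\ L\ } S/I \to S/(I+(L)) \to 0$, which holds for $L$ of degree $e_i$ and expresses $\HS(S/(I:L),Z)$ in terms of $\HS(S/I,Z)$ and $\HS(S/(I+(L)),Z)$. Since $I \in \CS(S)$ and, by part (2), $I+(L) \in \CS(S)$, both of the latter are realized by radical Borel-fixed ideals; the point is then to exhibit a radical Borel-fixed ideal realizing the difference. Concretely, after passing to generic coordinates and taking initial ideals with $L$ generic (so $\gin(I:L) = \gin(I):L$ for the generic variable $L$, an equality one should verify for Borel-fixed monomial ideals $J$: $J:x_{0i}$ is again radical and Borel-fixed), the colon ideal $\gin(I) : x_{0i}$ is manifestly a radical Borel-fixed monomial ideal, and a Hilbert-series computation shows it has the same multigraded Hilbert series as $I:L$. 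That delivers $I:L \in \CS(S)$.

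For (3), I would argue that $R = K[U_1,\dots,U_n]$ is, after a multigraded change of coordinates, literally a coordinate subring $K[x_{ij} : i \le \dim U_j - 1]$ of $S$, and that contraction to such a subring is obtained by iterating the elimination of one variable. Eliminating a single variable $x_{m_j j}$ amounts to the operation $I \mapsto (I : x_{m_j j}^\infty) \cap K[\text{other variables}]$, or more cleanly, one realizes $I \cap R$ via the saturation/quotient constructions already shown to preserve $\CS$ in parts (1) and (2) together with the observation that setting a variable to zero and intersecting with the subring it was removed from are dual operations that each send radical Borel-fixed ideals to radical Borel-fixed ideals in the appropriate ring with matching Hilbert series. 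Chaining these steps reduces the codimension of the ambient ring one at a time until only $R$ remains.

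The main obstacle, in all three parts, is the interaction between \emph{generic} coordinate changes and the \emph{specific} linear form $L$: one must justify that, after replacing $L$ by a generic linear form in the span of the variables of degree $e_i$ (equivalently, after a generic $g \in G$, replacing $L$ by a single variable), the operations $\inid_\sigma(\cdot)$, $(\cdot):L$, $(\cdot)+(L)$ and $(\cdot)\cap R$ behave as expected — i.e., that they commute with taking generic initial ideals and preserve Hilbert series. The cleanest route is to prove the companion combinatorial facts for Borel-fixed monomial ideals directly (that $J:x_{0i}$, $J+(x_{0i})$, and $J \cap K[\text{subset of variables}]$ are radical and Borel-fixed when $J$ is), then transport back via Proposition \ref{oldprop1}; this isolates the genuinely delicate point — the semicontinuity/flatness argument that the Hilbert series is unchanged under the passage to a generic $L$ — into a single lemma used three times.
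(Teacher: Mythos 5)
The survey does not actually prove Proposition \ref{closures}: it is recalled from \cite{CDG1,CDG3,CDG4,CDG2}, so there is no in-paper argument to compare against. Your overall strategy --- reduce everything to the characterization of Proposition \ref{oldprop1} by exhibiting a radical Borel-fixed companion ideal, prove the companion combinatorial facts for radical Borel-fixed monomial ideals (that $J:x$, the image of $J$ in $S/(x)$, and $J\cap K[\mbox{suitable subsets of variables}]$ are again radical and Borel-fixed), and isolate the genericity issue into one lemma --- is the right one and matches the spirit of the cited proofs; you also correctly locate where the difficulty lives.

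However, the proposal has genuine gaps exactly at that difficulty, plus some misstatements. (i) A generic $g\in G$ sends $L$ to a \emph{generic} linear form, not to a variable; the legitimate reduction is that the CS property and the three operations are $G$-equivariant, so one may normalize $L$ to a fixed variable \emph{before} taking gins --- but the subsequent generic coordinate change undoes the normalization, and resolving that tension is the whole content, not a side remark. Relatedly, you cannot simply ``first replace $L$ by a generic linear form'': the statement is for an arbitrary $L$, and $\HF(I+(L))$, $\HF(I:L)$ genuinely depend on $L$; nor is $L$ a nonzerodivisor modulo $I$ in general (e.g. $L$ may lie in $I$). (ii) The inequalities available for free all point the same way: from $\inid_\sigma(g(I+(L)))\supseteq \gin_\sigma(I)+(x_{0i})$, and from $f\in I:L\Rightarrow \inid_\sigma(gf)\,x_{0i}\in\gin_\sigma(I)$, one obtains in both cases $\HF(I:L,a)\le\HF(\gin_\sigma(I):x_{0i},a)$; feeding these into the exact sequence $0\to (S/(I:L))(-e_i)\to S/I\to S/(I+(L))\to 0$ does not force the equality of Hilbert series you assert. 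Closing this requires a genuinely new input, e.g. the exact commutation of revlex initial ideals with $+(x)$ and $:x$ for the \emph{smallest} variable $x$ (Bayer--Stillman, \cite[Section 15.7]{E}) or structural facts about radical Borel-fixed ideals; note that your choice of the \emph{largest} variable $x_{0i}$ is the wrong normalization for those commutation identities, even though $\gin_\sigma(I)+(x_{0i})$ and $\gin_\sigma(I):x_{0i}$ are indeed radical and Borel-fixed. (iii) In part (3), eliminating a variable $x$ is the operation $I\mapsto I\cap K[\mbox{other variables}]$, not $(I:x^\infty)\cap K[\mbox{other variables}]$; the clean route is an elimination order $\tau$, for which $\inid_\tau(I)\cap R=\inid_\tau(I\cap R)$, together with the observation that after normalization $R$ is spanned by initial segments $x_{0j},\dots,x_{d_j j}$ of each block, so that $\gin_\tau(I)\cap R$ is radical and Borel-fixed in $R$. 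As written, your text is a plausible roadmap rather than a proof.
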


Moreover one has

\begin{prop}\label{oldprop2} 
\begin{itemize} 
\item [(1)] If $I\in \CS(S)$ then $S/I$ has multiplicity-free multidegree. 
\item[(2)] Vice versa, suppose that $K$ is algebraically closed, $I$ is a relevant prime ideal of $S$, and $S/I$ has multiplicity-free multidegree. Then $I\in \CS(S)$.
\end{itemize}
\end{prop}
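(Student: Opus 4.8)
The plan is to deduce both statements from the combinatorial description of multidegrees of radical Borel-fixed ideals together with the invariance of multigraded Hilbert series under taking generic initial ideals. For part (1), let $I\in\CS(S)$ and let $J=\gin_\tau(I)$ for some term order $\tau$; by the definition of $\CS$ the ideal $J$ is radical, and it is Borel-fixed and monomial, hence it is a squarefree monomial ideal, say with Stanley--Reisner complex $\Delta$. Since passing to an initial ideal preserves the multigraded Hilbert function, $S/I$ and $S/J$ have the same multidegree, so it suffices to show that $S/J$ has multiplicity-free multidegree. Here I would invoke Lemma \ref{multiHF-easy}: the multigraded Hilbert polynomial of $S/J$ is $\sum_{F\in R(\Delta)}\prod_{j=1}^n\binom{Z_j-1}{c_j(F)-1}$, and extracting the top-degree part shows that the coefficient $e_b(S/J)$ equals the number of relevant facets $F$ of $\Delta$ with $c(F)-\mathbf 1=b$. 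The point to argue is that for a Borel-fixed squarefree complex $\Delta$, no two distinct relevant facets $F,F'$ can have the same vector $c(F)=c(F')$: if they did, then (because $\Delta$ is Borel-fixed, so replacing a variable $x_{ij}$ in a face by $x_{i'j}$ with $i'<i$ yields another face) one could compare the two facets coordinate-block by coordinate-block and use the shifting property to produce a face strictly containing one of them, contradicting maximality. This is the heart of the matter and the step I expect to require the most care; it is essentially the statement that a Borel-fixed radical monomial ideal is determined in each multidegree block by a single "staircase", and it is presumably proved in one of \cite{CDG1,CDG4}.

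For part (2), assume $K$ is algebraically closed, $I$ is a relevant prime, and $S/I$ has multiplicity-free multidegree. By Proposition \ref{oldprop1} it is enough to exhibit a radical Borel-fixed multigraded ideal with the same multigraded Hilbert series as $I$; equivalently, since relevant prime ideals have Hilbert series determined up to the combinatorial data by their multidegree in the situations at hand, it suffices to realize the multidegree of $S/I$ by a radical Borel-fixed ideal. The natural candidate is built directly from the multidegree: write $\Deg_{S/I}(Z)=\sum_{b\in B}Z^b$ where $B\subseteq\NN^n$ is the (by hypothesis, multiplicity-free) support, all $b\in B$ having $|b|=d(S/I)$ and $b_i\le m_i$, and let $J$ be the squarefree monomial ideal whose Stanley--Reisner complex $\Delta$ has facets $F_b=\{(0,j),\dots,(b_j,j): j\in[n]\}$ for $b\in B$ (together with whatever lower-dimensional faces are forced). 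One checks that $\Delta$ is Borel-fixed: it is built from "initial segments" in each block, which is exactly the Borel-fixed condition for squarefree monomial ideals. Then Lemma \ref{multig-easy1} (after verifying purity, which holds because all $b\in B$ have the same $|b|$, so all the $F_b$ have the same dimension, and $\FF(\Delta)\cap R(\Delta)\ne\emptyset$) gives $\Deg_{S/J}(Z)=\sum_{b\in B}Z^b=\Deg_{S/I}(Z)$.

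It then remains to upgrade the agreement of multidegrees to agreement of multigraded Hilbert series, which is where I would use a result on relevant primes — Brion's theorem (cited after the definition of multiplicity-free multidegree) giving Cohen--Macaulayness of $S/I$, combined with the fact that a Cohen--Macaulay multigraded module with prescribed multidegree and prescribed (co)dimension has its full multigraded Hilbert series pinned down, or alternatively a direct argument from \cite{CDG4} that was alluded to in the commented-out passage relating the two multidegrees. With the Hilbert series of $S/I$ and $S/J$ equal, $J$ radical and Borel-fixed, Proposition \ref{oldprop1} yields $I\in\CS(S)$. The main obstacle throughout is the bookkeeping connecting the two notions of multidegree and the Hilbert series for relevant primes; the combinatorial construction of $J$ and the verification that it is Borel-fixed via Lemmas \ref{multiHF-easy} and \ref{multig-easy1} is routine once that dictionary is in place.
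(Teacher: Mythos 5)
The paper itself does not prove this proposition: it is recalled from \cite{CDG1,CDG3,CDG4,CDG2}, and for part (2) the authors explicitly defer to Brion \cite{B}. Judged on its own, your part (1) is essentially sound. The combinatorial step you flag as the heart of the matter --- that a radical Borel-fixed monomial ideal has at most one relevant maximal face per $c$-vector --- does hold, but the clean argument is not to compare two facets; rather one shows that \emph{every} facet $F$ of the complex is a terminal segment $\{(i,j): i\ge a_j\}$ in each block. Indeed, minimal generators of a radical Borel-fixed ideal involve at most one variable per block, so if $(i,j)\in F$, $(i',j)\notin F$ with $i'>i$ and $F\cup\{(i',j)\}$ were not a face, a generator $x_{i'j}v$ with $\mathrm{supp}(v)\subseteq F$ would, after the Borel exchange $x_{i'j}\mapsto x_{ij}$, give $x_{ij}v\in J$ supported in $F$, a contradiction; hence $F$ is determined by $c(F)$ and Lemma \ref{multiHF-easy} gives $e_b\in\{0,1\}$. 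Note also that with the convention $x_{0j}>x_{1j}>\cdots$ the Borel-fixed facets are terminal, not initial, segments, so the $F_b=\{(0,j),\dots,(b_j,j)\}$ you build in part (2) are fixed by the lower triangular group; you would need $\{(m_j-b_j,j),\dots,(m_j,j)\}$. That is cosmetic.

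The genuine gap is in part (2), exactly at the step you describe as an ``upgrade'': passing from equality of multidegrees to equality of multigraded Hilbert series, which is what Proposition \ref{oldprop1} requires. The principle you invoke --- that a Cohen--Macaulay multigraded module of prescribed codimension and multidegree has its Hilbert series pinned down --- is false: already in the $\ZZ$-graded case, a twisted cubic and a plane cubic in $\PP^3$ are both arithmetically Cohen--Macaulay of codimension $2$ and degree $3$ with different Hilbert series. For a relevant prime with multiplicity-free multidegree the Hilbert series \emph{is} determined by the multidegree, but that is precisely the content of Brion's theorem \cite[Theorem 1]{B}: $S/I$ admits a flat, hence Hilbert-series-preserving, degeneration to the reduced union of products of coordinate subspaces indexed by the support of the multidegree, i.e.\ to your $S/J$ with $J=\bigcap_{b}P_{A(b)}$ radical and Borel-fixed, after which Proposition \ref{oldprop1} applies. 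As written, your argument assumes the essential (and deep) content of the statement rather than supplying it; the rest of the proposal is correct bookkeeping around this one missing input.
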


Part (2) is proved in \cite{B} using a different terminology. 


\section{Determinantal Cartwright-Sturmfels ideals}
\label{DetCS}

The goal of this section is to discuss Cartwright-Sturmfels ideals that are generated by minors of matrices. New results on the family of Schubert determinantal ideals will be presented in Section \ref{sect:schubert}.

We start by discussing generic determinantal ideals, i.e., ideals of same-size minors of the  matrix of variables. 
Let $X=(x_{ij})$ be an $m\times n$ matrix of variables over a field $K$ and $S=K[x_{ij} : 1\leq j\leq n \mbox{ and } 1\leq i \leq m]$. We consider the $\ZZ^n$-graded structure on $S$ induced by $\deg(x_{ij})=e_j\in \ZZ^n$. In the notation of Section \ref{multidegreeStructure} we have $m_j=m-1$ for $j=1,\dots, n$ and, in accordance with usual notation for matrices, the index $i$ varies from $1$ to $m$. 
Let $I_t(X)$ be the ideal of $S$ generated by the $t$-minors of $X$. Clearly  $I_t(X)$ is $\ZZ^n$-graded and our first goal is to compute the multidegree of $S/I_t(X)$. 

The multigraded Hilbert function, hence the multidegree, does not change if we replace $I_t(X)$ with an initial ideal. The ideal $I_t(X)$ has a well-known square free initial ideal, discussed in \cite{S, HT, BC}. It is the ideal generated by the products of the entries on the main diagonals of the $t$-minors, whose associated simplicial complex will be denoted by $\Pi_t$. The facets of $\Pi_t$ can be identified with the non-intersecting paths in the grid $[m]\times [n]$ from the starting points $p_1=(1,n), p_2=(2,n),\ldots, p_{t-1}=(t-1,n)$ to the endpoints $q_1=(m,1), q_2=(m,2),\ldots, q_{t-1}=(m, t-1)$. 

For example, for $m=4$, $n=5$, and $t=3$, we have $p_1=(1,5)$, $p_2=(2,5)$, $q_1=(4,1)$, and $q_2=(4,2)$. 
The following is a facet of $\Pi_3$:
$$\begin{array}{ccccccc}
- & 1 &  1  &  1  &1  & \leftarrow  p_1  \\
1 & 1 & -  &  2   &2  & \leftarrow  p_2  \\
1 & - & 2 &  2  & - \\
 1& 2  & 2  &  -  & - \\
 \uparrow & \uparrow \\
 q_1  & q_2 \\
\end{array}$$
depicted using the matrix coordinates and marking with ``1" the lattice points which belong to the first path (from $p_1$ to $q_1$) and with ``2" the lattice points of the second path (from $p_2$ to $q_2$). 

Each family of non-intersecting paths must have at least $t-1$ points on each column. Therefore, each facet of $\Pi_t$ is relevant if $t>1$. With the notation of Section \ref{multidegreeStructure}, $\FF(\Pi_t)\subseteq R(\Pi_t)$. Summing up, by Lemma \ref{multig-easy1} we have
 
\begin{equation}\label{MD1} 
\Deg_{S/I_t(X)}(Z_1,\dots,Z_n)=\sum_{F\in \FF(\Pi_t)}  Z_1^{c_1(F)-1}\cdots Z_n^{c_n(F)-1},
\end{equation} 
where $c_j(F)=|\{ (a,b)\in F : b=j\}|$.
 
We introduce the generating function associated to the statistics $c(F)$. Given a collection $U$ of subsets of $[m]\times [n]$ we set 
$$W(U,Z_1,\dots, Z_n)=\sum_{F\in U} Z_1^{c_1(F)}\cdots Z_n^{c_n(F)},$$  
 so that we may rewrite (\ref{MD1}) as
 
\begin{equation}\label{MD2} 
\Deg_{S/I_t(X)}(Z_1,\dots,Z_n) =(Z_1\cdots Z_n)^{-1} W(\FF(\Pi_t),Z_1,\dots, Z_n).
\end{equation} 
 
Next we give a determinantal formula for $W(\FF(\Pi_t),Z_1,\dots, Z_n)$. One observes that the Gessel--Viennot involution \cite{GV}, used in \cite{HT} to compute $|\FF(\pi_t)|$, is compatible with any weight given to the lattice points. Hence one gets immediately

\begin{equation}\label{MD3} 
W(\FF(\Pi_t),Z_1,\dots, Z_n)=\det \left( W( \Paths(p_i,q_j) ,Z_1,\dots, Z_n)\right)_{i,j=1,\dots,  t-1}
\end{equation} 
where  $\Paths(p_i,q_j)$ is the set of the paths for $p_i$ to $q_j$.  

In the sequel, $h_v(L)$ denotes the complete homogeneous symmetric polynomial of degree $v$ on the set $L$, i.e., the sum of all monomials of degree $v$ in the elements of $L$.  

\begin{lemma}\label{weightonepath} 
Given $p=(a,b)$ and $q=(c,d)$ with $a \leq c$ and $b\geq d$, we have 
$$W(\Paths(p,q) ,Z_1,\dots, Z_n)=\left(\prod_{i=d}^b Z_i \right) h_{c-a}(Z_d,Z_{d+1},\dots, Z_b).$$ 
\end{lemma}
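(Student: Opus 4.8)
The plan is to set up a weight-preserving bijection between lattice paths from $p=(a,b)$ to $q=(c,d)$ and certain combinatorial objects that visibly generate the stated product. A monotone lattice path in the grid from $p=(a,b)$ to $q=(c,d)$ (using the matrix coordinates, so going down and to the left) passes through each of the columns $d, d+1, \dots, b$, and through no others. Hence every path $F$ contributes a monomial $Z_d^{c_d(F)} Z_{d+1}^{c_{d+1}(F)} \cdots Z_b^{c_b(F)}$ with each exponent $c_i(F) \geq 1$. Factoring out one copy of each variable $Z_d, Z_{d+1}, \dots, Z_b$ gives the prefactor $\prod_{i=d}^b Z_i$, and what remains is $\sum_F \prod_{i=d}^b Z_i^{c_i(F)-1}$.

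The key step is then to identify $\sum_F \prod_{i=d}^b Z_i^{c_i(F)-1}$ with $h_{c-a}(Z_d, \dots, Z_b)$. Here I would argue as follows. A monotone path from row $a$ to row $c$ makes exactly $c-a$ vertical (downward) steps in total, distributed among the columns it visits; in column $i$ the number of lattice points of the path minus one equals the number of vertical steps taken while the path sits in column $i$. Writing $d_i := c_i(F)-1$ for the number of downward steps in column $i$, a path is determined precisely by the sequence $(d_d, d_{d+1}, \dots, d_b)$ of non-negative integers summing to $c-a$: one reads the path from $p$ by descending $d_b$ units in column $b$, stepping left, descending $d_{b-1}$ units in column $b-1$, and so on, ending at $(c,d)=q$. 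Conversely every such sequence arises from a unique path. Therefore
$$\sum_{F\in\Paths(p,q)} \prod_{i=d}^b Z_i^{c_i(F)-1} = \sum_{\substack{d_d,\dots,d_b\geq 0\\ d_d+\cdots+d_b = c-a}} Z_d^{d_d}\cdots Z_b^{d_b} = h_{c-a}(Z_d,\dots,Z_b),$$
where the last equality is just the definition of the complete homogeneous symmetric polynomial. Combining this with the prefactor gives the claim.

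I expect the only genuinely delicate point to be the bookkeeping in the bijection between paths and exponent sequences — specifically, making sure the count "lattice points in column $i$ minus one" equals "downward steps in column $i$" and that the boundary columns $d$ and $b$ (where the endpoints live) are handled correctly, so that the sum over sequences ranges over \emph{all} compositions of $c-a$ into $b-d+1$ non-negative parts with no further constraint. The hypotheses $a\leq c$ and $b\geq d$ are exactly what guarantee that $\Paths(p,q)$ is nonempty and that such monotone paths exist; they should enter only at this stage. Everything else is the elementary observation that a monotone path visits a contiguous block of columns, each at least once.
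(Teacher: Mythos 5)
Your proposal is correct and follows essentially the same route as the paper's proof: both arguments observe that a monotone path from $p$ to $q$ is uniquely determined by the numbers of its points on columns $d,\dots,b$, each at least one and summing to $(b-d)+(c-a)+1$, and then read off the generating function as $\bigl(\prod_{i=d}^b Z_i\bigr)h_{c-a}(Z_d,\dots,Z_b)$. Your version merely spells out the bijection (via downward steps per column) in more detail than the paper does.
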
 

\begin{proof} 
Any path $P$ from $p$ to $q$ is uniquely determined by the the number of points of intersection with the columns. Any such path must have at least one point on column $j$ for all $d\leq j\leq b$, and no points on the other columns. The only other constraint is that the path has $(b-d)+(c-a)+1$ points in total. In terms of $c(P)=(c_1(P),\dots, c_n(P))$ the constrains are $c_j(P)>0$ if and only if $d\leq j\leq b$ and $\sum_{j=d}^b c_j(P)=(b-d)+(c-a)+1$. Expressing this in terms of generating functions yields the desired result.
\end{proof} 

We can now compute the multidegree of generic determinantal rings.

\begin{thm}\label{multidegdet}
The multidegree of the determinantal ring $S/I_t(X)$ of an $m\times n$ matrix of variables $X$  with  $2\leq t\leq \min\{m,n\}$ and with respect to the $\ZZ^n$-graded structure induced by $\deg(x_{ij})=e_j$ is 
$$\Deg_{S/I_t(X)}(Z_1,\dots,Z_n)=(Z_1\cdots Z_n)^{t-2} 
\det\left(h_{m+1-t-i+j}(Z_1,\dots,Z_n)\right)_{i,j=1,2,\dots, t-1}.$$
\end{thm}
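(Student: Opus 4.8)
The strategy is to start from the Gessel--Viennot determinantal formula \eqref{MD3} for $W(\FF(\Pi_t),Z_1,\dots,Z_n)$, substitute the closed form for a single path supplied by Lemma \ref{weightonepath}, and then massage the resulting $(t-1)\times(t-1)$ determinant into the clean symmetric shape claimed, finally dividing by $Z_1\cdots Z_n$ as dictated by \eqref{MD2}.

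\textbf{Step 1: plug in the path weights.} The starting points are $p_i=(i,n)$ and the endpoints are $q_j=(m,j)$, so in the notation of Lemma \ref{weightonepath} we have $p=(a,b)=(i,n)$ and $q=(c,d)=(m,j)$, giving $c-a=m-i$, and the interval of columns touched is $[d,b]=[j,n]$. Hence
$$W(\Paths(p_i,q_j),Z_1,\dots,Z_n)=\left(\prod_{k=j}^{n} Z_k\right) h_{m-i}(Z_j,Z_{j+1},\dots,Z_n).$$
Substituting into \eqref{MD3} yields
$$W(\FF(\Pi_t),Z)=\det\left(\left(\prod_{k=j}^{n}Z_k\right) h_{m-i}(Z_j,\dots,Z_n)\right)_{i,j=1,\dots,t-1}.$$

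\textbf{Step 2: pull out column factors and reindex.} Each column $j$ carries the common factor $\prod_{k=j}^n Z_k$; multilinearity of the determinant lets us extract $\prod_{j=1}^{t-1}\prod_{k=j}^{n}Z_k$. It is convenient instead to extract from column $j$ only the ``extra'' factor $Z_j$ relative to column $j{+}1$, i.e.\ to write $\prod_{k=j}^n Z_k = (Z_j Z_{j+1}\cdots Z_{t-1})\cdot(Z_t\cdots Z_n)$, so that the common factor $(Z_t\cdots Z_n)^{t-1}$ comes out of all columns and a triangular array of the remaining $Z_j$'s can be cleared by elementary column operations; after this one is left with $\det(h_{m-i}(Z_j,\dots,Z_n))$ up to an explicit monomial. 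The main bookkeeping is to check that the monomial one accumulates, together with the $(Z_1\cdots Z_n)^{-1}$ from \eqref{MD2}, collapses exactly to $(Z_1\cdots Z_n)^{t-2}$ and that the surviving determinant can be rewritten with entries $h_{m+1-t-i+j}(Z_1,\dots,Z_n)$ in \emph{all} the variables rather than the truncated sets $Z_j,\dots,Z_n$.

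\textbf{Step 3: replace truncated $h$'s by full $h$'s via column operations.} This is the crux. One uses the standard identity $h_v(Z_1,\dots,Z_n)=\sum_{\ell\geq 0} e_\ell(Z_1,\dots,Z_{j-1})\,h_{v-\ell}(Z_j,\dots,Z_n)$ (equivalently, the generating-function factorization $\prod_i (1-Z_i T)^{-1}=\prod_{i<j}(1-Z_iT)^{-1}\cdot\prod_{i\geq j}(1-Z_iT)^{-1}$) to express each full-variable complete symmetric polynomial as a triangular (in the degree) combination of the truncated ones appearing in the matrix of Step 1. Performing the corresponding column operations on $\det(h_{m-i}(Z_j,\dots,Z_n))_{i,j}$ — adding to column $j$ suitable $e_\ell(Z_1,\dots,Z_{j-1})$-multiples of earlier columns — turns the entries into $h_{m-i}(Z_1,\dots,Z_n)$ shifted appropriately in degree; a final shift of the degree index by a column-dependent constant (absorbed into a monomial prefactor, or realized by a unipotent column transformation that preserves the determinant) produces the symmetric entries $h_{m+1-t-i+j}(Z_1,\dots,Z_n)$. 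Reconciling all monomial prefactors with the $(Z_1\cdots Z_n)^{-1}$ of \eqref{MD2} then gives the stated $(Z_1\cdots Z_n)^{t-2}$.

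\textbf{Main obstacle.} The conceptual content is light — everything reduces to \eqref{MD3} and Lemma \ref{weightonepath} — so the real work, and the step most likely to hide a sign or index error, is Step 3: converting the determinant of truncated complete-symmetric polynomials $h_{m-i}(Z_j,\dots,Z_n)$ into the determinant of full-variable ones $h_{m+1-t-i+j}(Z_1,\dots,Z_n)$ by column operations while scrupulously tracking the monomial that is pulled out, so that after dividing by $Z_1\cdots Z_n$ one lands on exactly $(Z_1\cdots Z_n)^{t-2}$ and on the index $m+1-t-i+j$ rather than something off by a constant. A clean way to organize this is to work throughout with the generating series $\sum_{v} h_v(\cdot)\,T^v = \prod(1-Z_iT)^{-1}$, so that the column operations become multiplications by the polynomials $\prod_{i<j}(1-Z_iT)$ and the bookkeeping is reduced to reading off coefficients of $T$.
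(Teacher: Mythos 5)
Your plan is essentially the paper's own proof: combine (\ref{MD2}), (\ref{MD3}) and Lemma \ref{weightonepath}, pull the column monomials out of the Gessel--Viennot determinant, and then use unipotent column operations to convert $\det\left(h_{m-i}(Z_j,\dots,Z_n)\right)$ into $Z_1^{t-2}Z_2^{t-3}\cdots Z_{t-2}\det\left(h_{m+1-t-i+j}(Z_1,\dots,Z_n)\right)$ --- the paper realizes your Step~3 by repeatedly subtracting consecutive columns via $h_{v}(Z_{j+1},\dots,Z_n)-h_{v}(Z_j,\dots,Z_n)=-Z_j h_{v-1}(Z_j,\dots,Z_n)$, which is just the factored form of the single triangular transformation you describe. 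One correction to Step~3: the identity you display is false as written (expanding the full $h_v(Z_1,\dots,Z_n)$ in the truncated ones uses coefficients $h_\ell(Z_1,\dots,Z_{j-1})$, not $e_\ell$); what you actually need, and what your generating-function remark correctly encodes via $\prod_{i\geq j}(1-Z_iT)^{-1}=\prod_{i<j}(1-Z_iT)\cdot\prod_{i}(1-Z_iT)^{-1}$, is the reverse expansion $h_v(Z_j,\dots,Z_n)=\sum_{\ell\geq 0}(-1)^{\ell}e_\ell(Z_1,\dots,Z_{j-1})\,h_{v-\ell}(Z_1,\dots,Z_n)$, whose top term $(-1)^{j-1}Z_1\cdots Z_{j-1}$ supplies the monomial $Z_1^{t-2}Z_2^{t-3}\cdots Z_{t-2}$ and whose signs cancel against the final column reversal.
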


\begin{proof} 
Combining (\ref{MD2}) and (\ref{MD3}) with Lemma \ref{weightonepath} we obtain
$$\Deg_{S/I_t(X)}(Z_1,\dots,Z_n) = (Z_1\cdots Z_n)^{-1} \det\left(\left(\prod_{k=j}^n Z_k\right) 
h_{m-i}(Z_j,\dots, Z_n)\right)_{i,j=1,2,\dots, t-1}.$$ 
For $j=1,\dots,t-1$, the factor $\prod_{k=j}^n Z_k $ can be extracted from the determinant, hence the monomial in front of the determinant becomes
$$Z_2Z_3^{2}\cdots Z_{t-2}^{t-3}  Z_{t-1}^{t-2}  Z_{t}^{t-2} \cdots Z_{n}^{t-2},$$
while the determinant becomes
$$\det\left(h_{m-i}(Z_j,\dots, Z_n)\right)_{i,j=1,2,\dots, t-1}.$$ 
It now suffices to prove that the latter equals
$$Z_1^{t-2}Z_2^{t-3}\cdots Z_{t-2} \det\left( h_{m+1-t-i+j}(Z_1,\dots, Z_n)\right).$$

Let us explain this last equality in full detail in the case $t=4$,
which is general enough to show all the relevant features. We wish to prove the equality
\begin{equation}\label{eqtoprove}
\begin{array}{r}
\det
\left(
\begin{array}{lll}
h_{m-1}(Z_1,\dots, Z_n)  & h_{m-1}(Z_2,\dots, Z_n) &  h_{m-1}(Z_3,\dots, Z_n)  \\
h_{m-2}(Z_1,\dots, Z_n)  & h_{m-2}(Z_2,\dots, Z_n) &  h_{m-2}(Z_3,\dots, Z_n)  \\
h_{m-3}(Z_1,\dots, Z_n)  & h_{m-3}(Z_2,\dots, Z_n) &  h_{m-3}(Z_3,\dots, Z_n)
\end{array}
\right)\phantom{.}
\\
= Z_1^2Z_2\det
\left(
\begin{array}{lll}
h_{m-3}(Z_1,\dots, Z_n)  & h_{m-2}(Z_1,\dots, Z_n)  &  h_{m-1}(Z_1,\dots, Z_n)    \\
h_{m-4}(Z_1,\dots, Z_n)  & h_{m-3}(Z_1,\dots, Z_n)  &  h_{m-2}(Z_1,\dots, Z_n)  \\
h_{m-5}(Z_1,\dots, Z_n)  & h_{m-4}(Z_1,\dots, Z_n)  &  h_{m-3}(Z_1,\dots, Z_n)
\end{array}
\right).
\end{array}\end{equation}

In the first term of (\ref{eqtoprove}), we subtract the second column from the third and the first column from the second. Since 
$h_{v}(Z_{j+1},\dots, Z_n)-h_{v}(Z_j,\dots, Z_n)=-Z_j h_{v-1}(Z_j,\dots, Z_n)$, we can factor out $-Z_2$ from the third column and $-Z_1$ from the second. The first term of (\ref{eqtoprove}) therefore becomes
$$Z_1Z_2 \det\left(
\begin{array}{lll}
h_{m-1}(Z_1,\dots, Z_n)  & h_{m-2}(Z_1,\dots, Z_n) &  h_{m-2}(Z_2,\dots, Z_n)  \\
h_{m-2}(Z_1,\dots, Z_n)  & h_{m-3}(Z_1,\dots, Z_n) &  h_{m-3}(Z_2,\dots, Z_n)  \\
h_{m-3}(Z_1,\dots, Z_n)  & h_{m-4}(Z_2,\dots, Z_n) &  h_{m-4}(Z_2,\dots, Z_n)
\end{array}\right).$$
Subtracting the second column from the third and factoring $-Z_1$, we obtain
$$-Z_1^2Z_2\det\left(\begin{array}{lll}
h_{m-1}(Z_1,\dots, Z_n)  & h_{m-2}(Z_1,\dots, Z_n) &  h_{m-3}(Z_1,\dots, Z_n)  \\
h_{m-2}(Z_1,\dots, Z_n)  & h_{m-3}(Z_1,\dots, Z_n) &  h_{m-4}(Z_1,\dots, Z_n)  \\
h_{m-3}(Z_1,\dots, Z_n)  & h_{m-4}(Z_2,\dots, Z_n) &  h_{m-5}(Z_1,\dots, Z_n)
\end{array}\right)$$
Finally we exchange rows one and three and then transpose. This yields 
$$Z_1^2Z_2\det\left(
\begin{array}{lll}
h_{m-3}(Z_1,\dots, Z_n)  & h_{m-2}(Z_1,\dots, Z_n)  &  h_{m-1}(Z_1,\dots, Z_n)    \\
h_{m-4}(Z_1,\dots, Z_n)  & h_{m-3}(Z_1,\dots, Z_n)  &  h_{m-2}(Z_1,\dots, Z_n)  \\
h_{m-5}(Z_1,\dots, Z_n)  & h_{m-4}(Z_1,\dots, Z_n)  &  h_{m-3}(Z_1,\dots, Z_n)
\end{array}\right)$$
which is the second term of (\ref{eqtoprove}).
\end{proof} 

The determinant  that appears in the statement of Theorem \ref{multidegdet} is a Schur polynomial. We refer to \cite{M} and \cite{Sta} for a treatment of the theory of symmetric functions and  Schur polynomials. Here we collect only the definitions and the properties that we will use in the sequel. 
A partition $\lambda$ is a weakly decreasing sequence of non-negative integers  $\lambda_1,\lambda_2,\ldots,\lambda_r$. Given a partition $\lambda=\lambda_1,\lambda_2,\ldots,\lambda_r$ the Schur polynomial $s_\lambda(Z)$ associated $\lambda$ and with respect to the variables 
$$Z=Z_1,\dots, Z_n$$
is  
$$s_\lambda(Z)=\det \left( h_{\lambda_i-i+j}(Z)  \right)_{i,j=1,\dots,r}.$$ 
By construction $s_\lambda(Z)$ is a symmetric homogeneous polynomial of degree $|\lambda|=\sum_{i=1}^r \lambda_i$ with integral coefficients.  Denote by $m_\mu(Z)$ the monomial symmetric polynomial associated with the partition $\mu=\mu_1\geq \mu_2 \geq \dots \geq \mu_n\geq 0$. Since the  $m_\mu(Z)$'s form a $K$-basis of the space of symmetric polynomials, one can express $s_\lambda(Z)$ as
$$s_\lambda(Z)=\sum_{\mu \ :\  |\mu|=|\lambda|} K_{\lambda, \mu} m_\mu(Z).$$
The   coefficients $K_{\lambda, \mu}$ are  known as the  Kostka numbers of the pair of partitions $\lambda, \mu$. We recall here their main properties. 
\begin{prop}
\label{Kostka numbers} 
For every pair of partitions $\lambda=\lambda_1,\lambda_2, \dots,  \lambda_r$ and $\mu=\mu_1, \mu_2,    \dots, \mu_n$ with $|\lambda|=|\mu|$ one has: 
\begin{itemize} 
\item[(1)] $K_{\lambda, \mu}\in \NN$. 
\item[(2)] $K_{\lambda, \mu}>0$ if and only if $\lambda  \geq  \mu$ in the dominance order, i.e., $\sum_{i=1}^s \lambda_i \geq  \sum_{i=1}^s \mu_i$ for every $s=1,\dots, r$. 
\item[(3)] $K_{\lambda, \mu}$ is the number of semi-standard tableaux (i.e., tableaux with weakly increasing rows and strictly increasing columns) of shape $\lambda$ with  entries $1,\dots,n$ and  multiplicities  given by $\mu$ (i.e. $\mu_1$ entries are equal to $1$, $\mu_2$ entries are equal to $2$, and so on). 
\end{itemize} 
\end{prop}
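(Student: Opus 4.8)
The plan is to prove part (3) first, since it is the substantive statement, and then deduce (1) and (2) from it by short arguments. To prove (3) I would use the Lindström--Gessel--Viennot (LGV) lemma, the same combinatorial device already invoked above to obtain (\ref{MD3}). Fix $\lambda=\lambda_1\ge\cdots\ge\lambda_r\ge 0$ and set up a planar acyclic network with sources $A_1,\dots,A_r$ and sinks $B_1,\dots,B_r$, arranged so that a directed lattice path from $A_i$ to $B_j$ makes exactly $\lambda_i-i+j$ weighted horizontal steps, distributed among $n$ horizontal levels, a step at level $\ell$ carrying the weight $Z_\ell$; such a path exists only when $\lambda_i-i+j\ge 0$. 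Then the generating function of all directed paths $A_i\to B_j$ is precisely $h_{\lambda_i-i+j}(Z)$, so the Jacobi--Trudi determinant $s_\lambda(Z)=\det\bigl(h_{\lambda_i-i+j}(Z)\bigr)_{i,j}$ equals the signed, weighted sum over all $r$-tuples of paths $(P_1,\dots,P_r)$ with $P_i\colon A_i\to B_{w(i)}$ for a permutation $w$. The standard tail-swapping involution at the first intersection of two paths is weight-preserving and sign-reversing, so every intersecting configuration and every non-identity permutation cancels, leaving $s_\lambda(Z)=\sum Z^{c(P_1,\dots,P_r)}$ summed over the families of pairwise non-intersecting paths $P_i\colon A_i\to B_i$.

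The last point is that such a family is the same datum as a semistandard Young tableau $T$ of shape $\lambda$: the levels of the horizontal steps of $P_i$, read in order, give the entries of the $i$-th row of $T$ (weakly increasing by construction), disjointness of the paths translates into strict increase down the columns, and the total weight of the family is $Z^{c(T)}$, where $c(T)$ is the content of $T$. Grouping by content yields $s_\lambda(Z)=\sum_{|\mu|=|\lambda|}\bigl(\#\{\text{SSYT of shape }\lambda\text{ and content }\mu\}\bigr)\,m_\mu(Z)$, which is exactly (3). (Alternatively one may just cite \cite{M} or \cite{Sta}, where (3) is frequently taken as the definition of $s_\lambda$ and the Jacobi--Trudi identity is derived.) Given (3), part (1) is immediate, because $K_{\lambda,\mu}$ is then the cardinality of a finite set.

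For (2), the forward implication is elementary counting: if $T$ is a semistandard tableau of shape $\lambda$ and content $\mu$, then strict increase down each column forces $T(i,j)\ge i$ for every cell $(i,j)$, so every cell with entry $\le s$ lies in one of the first $s$ rows; counting such cells two ways gives $\mu_1+\cdots+\mu_s\le\lambda_1+\cdots+\lambda_s$ for all $s$, i.e.\ $\lambda\ge\mu$. For the converse, assuming $\lambda\ge\mu$ I would exhibit one tableau directly. Note first that only the entries $1,\dots,n$ are needed (as $\mu$ has $n$ parts) and that dominance forces $\lambda$ to have at most $n$ parts, so these entries suffice. Fill the Young diagram of $\lambda$ greedily, row by row from the top and, within each row, from left to right, always putting in the current cell the smallest value $k\in\{1,\dots,n\}$ that is $\ge$ the entry just placed to its left (if any), strictly larger than the entry immediately above it (if any), and still has unused multiplicity (out of its quota $\mu_k$). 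One checks that the dominance inequalities $\mu_1+\cdots+\mu_s\le\lambda_1+\cdots+\lambda_s$ are exactly what guarantees that this procedure never stalls, and it manifestly produces a semistandard tableau of shape $\lambda$ and content $\mu$; the routine verification is in \cite{M}, \cite{Sta}.

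The main obstacle is the bookkeeping in the LGV argument for (3): laying out the network so that the path generating functions come out to be precisely the entries $h_{\lambda_i-i+j}(Z)$ of the Jacobi--Trudi matrix, and so that the bijection between non-intersecting path families and semistandard tableaux carries the path weights onto the content monomials $Z^{c(T)}$. Once (3) is in hand, (1) is trivial and (2) requires only the two short arguments above.
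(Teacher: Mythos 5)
Your proposal is correct in substance, but note that the paper does not actually prove this proposition: it is stated as background, and the sentence immediately preceding it explicitly defers to \cite{M} and \cite{Sta} for the theory of symmetric functions, Schur polynomials, and Kostka numbers. What you have written is therefore a reconstruction of the standard textbook arguments rather than an alternative to anything in the paper. Your LGV derivation of (3) from the Jacobi--Trudi determinant is the classical one, and it meshes nicely with the paper's own use of the Gessel--Viennot involution to obtain the determinantal formula for $W(\FF(\Pi_t),Z_1,\dots,Z_n)$; part (1) is then immediate, and your counting argument for the forward implication of (2) (entries $\le s$ must sit in the first $s$ rows) is complete and correct. The one place where you defer genuine content is the converse of (2): the assertion that the greedy row-by-row filling ``never stalls'' precisely when the dominance inequalities hold is the entire difficulty of that direction and is not a routine check as written; the standard proofs in \cite{M} and \cite{Sta} instead start from the unique semistandard tableau of shape and content $\lambda$ and show that moving down one step at a time in the dominance order (a single box-lowering move on $\mu$) preserves the existence of a tableau. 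If you want a self-contained proof you need to carry out that verification; otherwise citing \cite{M} or \cite{Sta}, as the paper does, is the intended treatment.
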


This allows us to reformulate Theorem \ref{multidegdet} in terms of Schur polynomials. 

\begin{thm} 
\label{schurmultidegdet}
The multidegree of the determinantal ring $S/I_t(X)$ of an $m\times n$ matrix of variables $X$ with $2\leq t\leq \min\{m,n\}$ and with respect to the $\ZZ^n$-graded structure induced by $\deg(x_{ij})=e_j$ is 
$$\Deg_{S/I_t(X)}(Z_1,\dots,Z_n)=(Z_1\cdots Z_n)^{t-2} s_\lambda(Z) $$
where  
$$\lambda=\ell^{(t-1)}=\underbrace{\ell,\ell,\dots,\ell}_ {(t-1)-\mbox{times}} \mbox{ where }  \ell=m+1-t.$$
\end{thm}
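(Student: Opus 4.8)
The plan is to deduce Theorem~\ref{schurmultidegdet} directly from Theorem~\ref{multidegdet}, essentially as a change of bookkeeping. Theorem~\ref{multidegdet} already expresses $\Deg_{S/I_t(X)}(Z)$ as $(Z_1\cdots Z_n)^{t-2}$ times the determinant
$$\det\bigl(h_{m+1-t-i+j}(Z_1,\dots,Z_n)\bigr)_{i,j=1,\dots,t-1},$$
so everything reduces to recognizing this determinant as a Schur polynomial. First I would recall the Jacobi--Trudi identity in the form stated in the excerpt: for a partition $\lambda=\lambda_1,\dots,\lambda_r$ one has $s_\lambda(Z)=\det\bigl(h_{\lambda_i-i+j}(Z)\bigr)_{i,j=1,\dots,r}$. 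The task is then to choose $\lambda$ so that the index $\lambda_i-i+j$ matches $m+1-t-i+j$, i.e. so that $\lambda_i = m+1-t$ for all $i=1,\dots,t-1$. This forces $\lambda$ to be the rectangular partition with $t-1$ parts each equal to $\ell:=m+1-t$, that is $\lambda=\ell^{(t-1)}$, which is exactly the partition in the statement.

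Next I would check that this is a \emph{bona fide} partition and that the Jacobi--Trudi formula applies, i.e. that $\ell=m+1-t\geq 0$: this is guaranteed by the hypothesis $t\leq\min\{m,n\}\leq m$, so $\ell\geq 1$ in fact (and $\ell\geq 1$ also ensures $\lambda$ is nonzero when $t<m+1$; if $t=m$ we get the empty partition and $s_\lambda=1$, consistent with the determinant of an upper-unitriangular matrix of $h$'s). One should also note that $h_k(Z)=0$ for $k<0$ and $h_0(Z)=1$, so the entries below the superdiagonal that would have negative index vanish appropriately, and the determinant is well defined with the usual conventions; this matches the Jacobi--Trudi conventions recalled before Proposition~\ref{Kostka numbers}. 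Then the substitution $\lambda_i=\ell$ into $h_{\lambda_i-i+j}$ gives precisely $h_{\ell-i+j}=h_{m+1-t-i+j}$, so the determinant appearing in Theorem~\ref{multidegdet} is literally $s_{\ell^{(t-1)}}(Z)$ by definition.

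There is essentially no obstacle here: the result is a direct corollary once Theorem~\ref{multidegdet} is in hand, and the only thing to verify is the index matching and the nonnegativity of $\ell$. For completeness I would also remark that the degree of the Schur polynomial is $|\lambda|=(t-1)\ell=(t-1)(m+1-t)$, so the total degree of $\Deg_{S/I_t(X)}$ is $n(t-2)+(t-1)(m+1-t)$, which one can cross-check against the known codimension $(m-t+1)(n-t+1)$ of $I_t(X)$ via $d(S/I_t(X))=\dim(S/I_t(X))-n=mn-(m-t+1)(n-t+1)-n$; a short computation shows these agree, providing a sanity check but not needed for the proof. The proof is therefore a one-line application of Jacobi--Trudi to the determinantal formula of Theorem~\ref{multidegdet}, with the partition forced to be the $(t-1)\times\ell$ rectangle.
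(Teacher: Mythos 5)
Your proposal is correct and is essentially the paper's own argument: the theorem is obtained by recognizing the determinant $\det\bigl(h_{m+1-t-i+j}(Z)\bigr)_{i,j=1,\dots,t-1}$ from Theorem~\ref{multidegdet} as the Jacobi--Trudi expression for $s_{\ell^{(t-1)}}(Z)$ with $\ell=m+1-t$, exactly as the paper does when it introduces Schur polynomials right after that theorem. One harmless slip in your parenthetical: when $t=m$ you get $\ell=1$ and $\lambda=1^{(t-1)}$ (a single column), not the empty partition; the empty case $\ell=0$ corresponds to $t=m+1$, which is excluded by the hypotheses.
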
  

Summing up, we have the following combinatorial description of the multidegrees of determinantal rings. 

\begin{thm} 
\label{multidegdet2}
Let $X$ be an $m\times n$ matrix of variables and consider $S=K[X]$ with the $\ZZ^n$-graded structure induced by $\deg(x_{ij})=e_j$.
Let $2\leq t\leq \min\{m,n\}$ and let $S/I_t(X)$ be the associated $\ZZ^n$-graded determinantal ring. Set $\lambda=\ell^{(t-1)}$ with  $\ell=m+1-t$. 
For $b\in \NN^n$ with $|b|=\dim S/I_t(X)-n$, the multidegrees  $e_{b}(S/I_t(X))$ satisfy the following properties: 
\begin{itemize}
\item[(1)] $e_{b}(S/I_t(X))$ is a symmetric function of  $b$.  
\item[(2)] $e_{b}(S/I_t(X))> 0$ if and only if $t-2\leq b_i\leq m-1$ for every $i=1,\dots, n$. 
\item[(3)] Set $c_i=b_i+1$ for $i=1,\dots, n$. Then $e_{b}(S/I_t(X))$ is the number of families of non-intersectiong paths from $p_1=(1,n), p_2=(2,n), \dots, p_{t-1}=(t-1,n)$  to $q_1=(m,1), q_2=(m,2), \dots , q_{t-1}=(m, t-1)$ with exactly   $c_i$  points on the $i$-th column for $i=1,\dots, n$. 
\item[(4)] Set $\mu_i=b_i-(t-2)$  for $i=1,\dots, n$. Then $e_{b}(S/I_t(X))$ equals the Kostka number $K_{\lambda, \mu}$, that is, the number of semi-standard tableaux  of shape $\lambda$ with  entries $1,\dots,n$ and  multiplicities  given by $\mu$. 
\end{itemize} 
\end{thm}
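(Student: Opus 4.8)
The plan is to derive all four statements of Theorem \ref{multidegdet2} from the two descriptions of the multidegree already established, namely the path-counting formula \eqref{MD1}--\eqref{MD2} and the Schur-polynomial formula of Theorem \ref{schurmultidegdet}, together with the Kostka number facts in Proposition \ref{Kostka numbers}.

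First I would set up the dictionary. Fix $b\in\NN^n$ with $|b|=\dim S/I_t(X)-n$. Since $\dim S/I_t(X) = (t-1)(m+n-t+1)$ (or rather, since $d(S/I_t(X))$ is the total degree of the multidegree polynomial, which from Theorem \ref{schurmultidegdet} is $n(t-2)+|\lambda| = n(t-2)+(t-1)(m+1-t)$), the condition $|b|=d(S/I_t(X))$ matches the total degree of $(Z_1\cdots Z_n)^{t-2}s_\lambda(Z)$. Reading off the coefficient of $Z^b = Z_1^{b_1}\cdots Z_n^{b_n}$ in that product gives $e_b(S/I_t(X)) = [\,Z^{\mu}\,]\,s_\lambda(Z)$ where $\mu_i = b_i-(t-2)$; and the coefficient of the monomial $Z^\mu$ in $s_\lambda$ is, after sorting $\mu$ into weakly decreasing order, exactly the Kostka number $K_{\lambda,\mu}$ by definition of $m_\mu$ and the expansion $s_\lambda=\sum_\mu K_{\lambda,\mu}m_\mu$. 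This proves (4), modulo the observation that $s_\lambda$ is symmetric so the coefficient of $Z^\mu$ depends only on the multiset of exponents — which simultaneously gives (1), since $b\mapsto\mu$ is a translation by the constant vector $(t-2,\dots,t-2)$ and hence $e_b$ is symmetric in $b$ iff it is symmetric in $\mu$.

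For (3) I would argue directly from \eqref{MD1}: the coefficient of $Z_1^{b_1}\cdots Z_n^{b_n}$ there is, by the definition $c_j(F)=|\{(a,b)\in F: b=j\}|$ and the shift $c_i(F)-1$ in the exponents, precisely the number of facets $F\in\FF(\Pi_t)$ with $c_i(F)=b_i+1=c_i$ for all $i$. Facets of $\Pi_t$ are, as recalled in the text, exactly the families of non-intersecting paths from the $p_i$ to the $q_i$, and $c_i(F)$ counts the lattice points of $F$ on column $i$; so $e_b(S/I_t(X))$ counts such families with exactly $c_i$ points on column $i$, which is (3). Then (2) follows by combining (3) and (4): from (4) and Proposition \ref{Kostka numbers}(1)--(2), $e_b>0$ iff $\lambda\geq\mu$ in dominance order and all $\mu_i\geq 0$; the latter gives $b_i\geq t-2$, and since $\lambda=\ell^{(t-1)}$ with $\ell=m+1-t$ and $|\mu|=|\lambda|$, one checks that $\lambda\geq\mu$ (for $\mu$ a partition with at most $n$ parts, sorted decreasingly) together with $|\mu|=|\lambda|$ is equivalent to $\mu_1\leq\ell$, i.e. $b_i\leq m-1$ for all $i$ — here I use that $\mu$ has $|\mu|=(t-1)\ell$ spread over $n\geq t-1$ parts, so the only dominance obstruction is the first partial sum. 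Alternatively, and perhaps more cleanly, (2) can be read off (3): a nonempty family of non-intersecting paths exists with column counts $c_i$ iff each $c_i\geq t-1$ (every column must carry all $t-1$ paths) and $c_i\leq m$ (a single path has at most $m$ points on a column, and non-intersecting paths on one column occupy distinct rows, of which there are $m$); translating via $b_i=c_i-1$ gives $t-2\leq b_i\leq m-1$.

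The main obstacle I anticipate is (2): one must be careful that the two necessary conditions ``$c_i\geq t-1$ on every column'' and ``$c_i\leq m$ on every column'' are also \emph{sufficient} for the existence of at least one non-intersecting path family with those exact counts, and one must verify the equivalence with the dominance condition $\lambda\geq\mu$ from the Kostka side so the two proofs of (2) agree. I would handle sufficiency either by an explicit construction of such a path family (pushing paths as far right/down as the counts allow) or, more economically, by the semi-standard tableau model of Proposition \ref{Kostka numbers}(3): with $\lambda=\ell^{(t-1)}$ a rectangle with $t-1$ rows and $\ell$ columns and required content $\mu$ with $0\leq\mu_i\leq\ell$, one builds a filling row by row, and the rectangular shape makes the strict-column / weak-row conditions easy to satisfy greedily precisely under $\mu_1\leq\ell$. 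Everything else is bookkeeping: matching total degrees, tracking the shift by $(t-2,\dots,t-2)$, and invoking symmetry of Schur polynomials.
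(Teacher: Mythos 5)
Your proposal is correct and follows essentially the same route as the paper: (3) and (4) are read off as reformulations of the path formula \eqref{MD2} and the Schur/determinantal formula of Theorem \ref{multidegdet}, (1) comes from symmetry (of the Schur polynomial, equivalently invariance of $I_t(X)$ under column permutations), and (2) comes from the positivity criterion for Kostka numbers in Proposition \ref{Kostka numbers}(2). Your explicit verification that for the rectangular shape $\lambda=\ell^{(t-1)}$ the dominance condition reduces to $\mu_1\leq\ell$ is exactly the computation the paper leaves implicit, and your alternative path-based argument for (2) is a harmless (if slightly delicate, since columns $j<t-1$ are not traversed by all $t-1$ paths) extra.
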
 

\begin{proof} (1) The fact that  $e_{b}(S/I_t(X))$ is a symmetric function of  $b$ follows  from the fact that   $I_t(X)$ is invariant under the permutation of the columns or from Theorem \ref{multidegdet}. Furthermore (3) and (4) are reformulations of Theorem \ref{multidegdet} and (\ref{MD2}). Finally 
(2) follows by applying Proposition \ref{Kostka numbers} part (2).
\end{proof} 
 
\begin{ex}\label{443multideg} 
For $m=n=4$ and $t=3$ we have that $\ell=m+1-t=2$, $t-1=2$ and $\lambda=2,2$. With $Z=Z_1,\dots, Z_4$ and $z=\prod_{i=1}^4  Z_i$ we have 
\begin{eqnarray*}
\Deg_{R/I_3(X)}(Z)= z \ s_{2,2}(Z)=
z \left( m_{(2,2,0,0)}+m_{(2, 1, 1, 0)}+2 m_{(1,1,1,1)} \right)= 
\\ m_{(3,3,1,1)}+m_{(3, 2, 2, 1)}+2 m_{(2,2,2,2)}
\end{eqnarray*}

By Proposition \ref{Kostka numbers}, the coefficients appearing in the expression have two combinatorial interpretations. For example, the coefficient $2$ of $m_{(2,2,2,2)}$ is the Kostka number $K_{22, 1111}$, i.e., the number of semistandard  tableaux of shape $2,2$ with entries $1,\dots,4$ and multiplicities given by $(1,1,1,1)$: 
$$
\begin{array}{cc}
1 & 2 \\ 3 & 4 
\end{array}
\quad 
\begin{array}{cc}
1 & 3 \\ 2 & 4 
\end{array}$$
Moreover, it is also the number of families of non-intersecting paths from $p_1=(1,4), p_2=(2,4)$ to $q_1=(4,1), q_2=(4,2)$  with $3$ points on each column: 
$$\begin{array}{cccc}
- & 1 &  1  &  1  \\
1 & 1 & -  &  2  \\
1 & - & 2 &  2  \\
 1& 2  & 2  &  -  \\
\end{array}
\quad  
\begin{array}{cccc}
- & - &  1  &  1  \\
1 & 1 & 1  &  2  \\
1 & 2 & 2 &  2  \\
 1& 2  & -  &  -  \\
\end{array}$$
We have also a geometric interpretation of the coefficient of $m_{(2,2,2,2)}$: it is the number of points that one gets by intersecting the  variety of $4\times 4$ matrices of rank at most $2$, regarded as a multigraded subvariety of $(\PP^3)^4$, with $L_1\times\dots \times L_4$. Here each $L_i$ is a generic linear space of codimension $2$ of $\PP^3$. Interpreting the four columns of the matrix as points in $\PP^3$, the rank $2$ condition means that the four points belong to a line in $\PP^3$ that must intersect the four generic lines $L_1,\dots, L_4$. How many lines intersect four general lines in $\PP^3$? The answer is $2$ and this a classical instance of Schubert calculus, see \cite[3.4.1]{EH} for a modern exposition.
\end{ex} 

The computation in Example \ref{443multideg} appears also in \cite{vW2} and it can be easily generalised. 

\begin{ex}\label{nnn-1multideg} 
For $m=n$ and $t=n-1$  we have that $\ell=m+1-t=2$  and $\lambda=2^{(n-2)}$. With  $Z=Z_1,\dots, Z_n$ and $z=\prod_{i=1}^n  Z_i$ we have
\begin{eqnarray*}
\Deg_{R/I_{n-1}(X)}(Z)= z^{n-2} \ s_{2^{(n-2)}}(Z)=
z^{n-2} \left( m_{\mu_1}+m_{\mu_2}+2 m_{\mu_3} \right) 
\end{eqnarray*}
with 
$$\mu_i=\left\{ 
\begin{array}{l}
(2^{(n-4)},2,2,0,0) \mbox{ if } i=1,\\
(2^{(n-4)},2,1,1,0) \mbox{ if } i=2,\\
(2^{(n-4)},1,1,1,1) \mbox{ if } i=3.
\end{array} 
\right.$$
\end{ex} 
 

\begin{rmk} 
In \cite[Chapter 15]{MS}  the authors compute the multidegree for a large family of determinantal ideals, the Schubert determinantal ideals,  with respect to  the finer multigrading $\deg X_{ij}=(e_i,-f_j) \in \ZZ^m\oplus \ZZ^n$ with $\{e_1,\dots,e_m\}$ and $\{f_1,\dots,f_n\}$ being the canonical bases of  $\ZZ^m$  and $\ZZ^n$. The ideal $I_t(X)$ is a Schubert determinantal ideals. For them the authors observe in that the multidegree is given by a Schur polynomial \cite[15.39]{MS} which is (at least apparently) different from the Schur polynomial that we have identified. 
\end{rmk} 

We are ready to state the main consequence of Theorem \ref{multidegdet2}.

\begin{thm}\label{freemultideg}
Let $S/I_t(X)$ be the determinantal ring of the $m\times n$ matrix of variables $X$ with the $\ZZ^n$-graded structure induced by $\deg(x_{ij})=e_j$, with $2\leq t\leq\min\{m,n\}$. Then $S/I_t(X)$ has a multiplicity-free multidegree if and only if $t=2$ or $ t=\min\{m,n\}$.
\end{thm}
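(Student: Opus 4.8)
The plan is to reduce the statement to the multiplicity-freeness of a single rectangular Schur polynomial and then treat the two directions separately. By Theorem~\ref{multidegdet2}, for every $b\in\NN^n$ with $|b|=\dim S/I_t(X)-n$ one has $e_b(S/I_t(X))=K_{\lambda,\mu}$, where $\lambda=\ell^{(t-1)}$ with $\ell=m+1-t$ and $\mu_i=b_i-(t-2)$, and moreover $e_b$ is a symmetric function of $b$. Using this symmetry to reduce to weakly decreasing $b$, one sees that $S/I_t(X)$ has a multiplicity-free multidegree if and only if $K_{\lambda,\mu}\le 1$ for every partition $\mu$ of $\ell(t-1)$ with at most $n$ parts, equivalently if and only if the Schur polynomial $s_\lambda(Z_1,\dots,Z_n)$ has all of its coefficients in the basis of monomial symmetric polynomials equal to $0$ or $1$.

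For the ``if'' direction, if $t=2$ then $\lambda=(m-1)$ is a single row and $s_\lambda=h_{m-1}$, all of whose monomial coefficients equal $1$. If $t=\min\{m,n\}=m\le n$ then $\ell=1$, so $\lambda=1^{(m-1)}$ is a single column and $s_\lambda=e_{m-1}$ is a single monomial symmetric polynomial. If $t=\min\{m,n\}=n\le m$ then $\lambda=\ell^{(n-1)}$ has exactly one part fewer than the number of variables, and complementing $\lambda$ inside the $n\times\ell$ box gives $s_\lambda(Z_1,\dots,Z_n)=(Z_1\cdots Z_n)^{\ell}\,h_\ell(Z_1^{-1},\dots,Z_n^{-1})$; since $h_\ell(Z_1^{-1},\dots,Z_n^{-1})$ is a sum of distinct monomials each with coefficient $1$, and multiplication by the single monomial $(Z_1\cdots Z_n)^{\ell}$ preserves this, $s_\lambda$ is multiplicity-free. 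In all three cases the multidegree is multiplicity-free.

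For the ``only if'' direction I argue by contraposition: assume $3\le t\le\min\{m,n\}-1$, so that $\ell\ge 2$, $t-1\ge 2$ and $n\ge t+1$, and I produce a partition $\mu$ with at most $t+1\le n$ parts, with $|\mu|=\ell(t-1)$, and with $K_{\lambda,\mu}=2$; this forces some $e_b$ to equal $2$. Take $\mu$ to consist of $\ell$ copies of each of the symbols $1,\dots,t-3$ (none when $t=3$), followed by one copy each of $t-2,t-1,t,t+1$ when $\ell=2$, and by $\ell-1$ copies each of $t-2$ and $t-1$ together with one copy each of $t$ and $t+1$ when $\ell\ge 3$. A short computation gives $|\mu|=\ell(t-1)$ and that $\mu$ has $(t-3)+4=t+1$ parts. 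In any semistandard tableau of shape $\lambda=\ell^{(t-1)}$ and content $\mu$, the rows $1,\dots,t-3$ are forced to be the constant rows filled with $1,\dots,t-3$ (the smallest symbol not yet used, having $\ell$ copies, must occupy the topmost empty row, which has length $\ell$), so such tableaux correspond bijectively to semistandard fillings of a $2\times\ell$ rectangle by the four remaining symbols, and a direct enumeration shows there are exactly two of them. When $\ell=2$ this is precisely the equality $K_{(2,2),(1,1,1,1)}=2$ recorded in Example~\ref{443multideg}. Hence $K_{\lambda,\mu}=2$ and $S/I_t(X)$ does not have a multiplicity-free multidegree.

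The main obstacle is this last step, and specifically the fact that $n$ may be as small as $t+1$: one cannot simply use the content $\mu=(1,\dots,1)$ --- for which $K_{\lambda,\mu}$ would be the number (at least $2$) of standard Young tableaux of the rectangle --- because that would require $\ell(t-1)=|\lambda|\le n$, which in general fails. Freezing the top $t-3$ rows of the tableau and pushing all of the flexibility into a $2\times\ell$ block is exactly what keeps the number of parts of $\mu$ equal to $t+1$; the price is the (easy) explicit check that a suitably chosen two-row content admits precisely two semistandard fillings.
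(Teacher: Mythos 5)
Your proof is correct and follows essentially the same route as the paper: reduce to Kostka numbers via Theorem~\ref{multidegdet2}, handle $t=2$ and $t=m\le n$ by observing that the shape is a single row or column, and for $2<t<\min\{m,n\}$ exhibit the same content $\mu$ (with $\ell$ copies of $1,\dots,t-3$, $\ell-1$ copies of $t-2,t-1$, and one copy each of $t,t+1$) and the same ``freeze the top rows, enumerate the remaining $2\times\ell$ block'' argument, landing on the two tableaux of Example~\ref{443443multideg}\footnote{i.e., Example~\ref{443multideg}}. The one genuine divergence is the case $t=n\le m$: the paper counts the $b$ with $e_b>0$ and compares with the ordinary multiplicity $\binom{m}{n-1}$ of $S/I_n(X)$ via (\ref{sum-mm}), whereas you invoke the box-complementation identity $s_{\ell^{(n-1)}}(Z_1,\dots,Z_n)=(Z_1\cdots Z_n)^{\ell}h_{\ell}(Z_1^{-1},\dots,Z_n^{-1})$. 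Your variant stays entirely inside symmetric-function theory and avoids quoting the multiplicity of the maximal-minors ring, at the cost of citing a standard duality identity not proved in the paper; both are perfectly sound.
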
 

\begin{proof} We need to prove two assertions:

\medskip  
{\bf Claim 1)} if  $t=2$ or $ t=\min\{m,n\}$, then $e_b(S/I_t(X))\in \{0, 1\}$ for all $b\in \ZZ^n$ with $|b|=\dim S/I_t(X)-n$. 

\medskip
{\bf Claim 2)} if $2<t<\min\{m,n\}$, then there exists $b\in \ZZ^n$ with $|b|=\dim S/I_t(X)-n$ such that  $e_b(S/I_t(X))>1$. 

\medskip 
Claim 1) for $t=2$ follows immediately from Theorem \ref{multidegdet}.  Claim 1) for $t=m\leq n$ follows from the description of $e_b(S/I_t(X))$ in Theorem \ref{multidegdet2}(4)  in terms of semi-standard tableau, since the corresponding shape is a single column. 
Finally Claim 1) for $t=n\leq m$ can be treated as follows. By Theorem \ref{multidegdet2}(1), $e_b(S/I_n(X))>0$ if and only if  $n-2\leq b_i \leq  m-1$ and $|b|=\dim S/I_n(X)-n$. Setting   $b_i=(m-1)-c_i$ and rewriting the conditions with respect to $(c_1,\dots, c_n)$, we have that $e_b(S/I_n(X))>0$ if and only if $(c_1,\dots, c_n)\in \NN^n$ and $\sum_{i=1}^n c_i=m-n+1$. Hence there are exactly $\binom{m}{n-1}$ elements $b\in \NN^n$ such that $e_b(S/I_n(X))>0$. By (\ref{sum-mm}), $\sum_b e_b(S/I_n(X))$ gives the ordinary multiplicity of $S/I_n(X)$, which is $\binom{m}{n-1}$. It follows that $e_b(S/I_n(X))=1$ whenever  $e_b(S/I_n(X))>0$. 

For Claim 2), by Theorem \ref{multidegdet2}(4) it suffices to show that there exists a $\mu\in \NN^n$ with $|\mu|=(t-1)\ell$, $\ell=m+1-t$, and such that there are at least two semi-standard tableaux of shape $\ell^{(t-1)}$ and entries $1,\dots, n$ with multiplicities given by $\mu$. One can take $$\mu_i=
\left\{ \begin{array}{ll} 
\ell  & \mbox{ if } i=1,2,\dots, t-3, \\
\ell-1  & \mbox{ if }i=t-2, t-1,  \\
1        & \mbox{ if } i=t, t+1,  \\
0     & \mbox{ if  }  i=t+2, \dots,  n.
\end{array}
\right.$$
For $i=1,\dots, t-3$, the $i$-th row of any semi-standard tableau of shape $\ell^{(t-1)}$ and multiplicities given by $\mu$ consists of exactly $\ell$ entries equal to $i$. So we may simply assume that $t=3$. Similarly we may assume that $n=t+1=4$, so that $\mu=(\ell-1,\ell-1,1,1)$. Now for $j=1,\dots, m-4$ the $j$-th column must have entries $1$ and $2$. Again we may then assume that $m=4$, hence $\ell=2$. Now it is clear that there are exactly two tableaux of shape $2,2$ and multiplicities given by $(1,1,1,1)$, namely those described in Example \ref{443multideg}.  
\end{proof} 

As a corollary of Theorem \ref{freemultideg} we have
\begin{cor} 
\label{CSdet} 
Let $S/I_t(X)$ be the determinantal ring  of the $m\times n$ matrix of variables $X$ with the $\ZZ^n$-graded structure induced by $\deg(x_{ij})=e_j$, with $1\leq t\leq \min\{m,n\}$. Then $I_t(X)$ is Cartwright-Sturmfels  if and only if $t=1,2$ or $t=\min\{m,n\}$.  
\end{cor}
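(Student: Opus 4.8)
\textbf{Proof proposal for Corollary~\ref{CSdet}.}

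The plan is to derive the corollary from Theorem~\ref{freemultideg} together with the dictionary between the Cartwright--Sturmfels property and multiplicity-freeness of the multidegree recorded in Proposition~\ref{oldprop2}, treating $t=1$ by hand. If $t=1$, then $I_1(X)=\mathfrak m$ is generated by the variables; it is monomial and Borel fixed, so $\gin_\sigma(I_1(X))=I_1(X)$ is radical and $I_1(X)\in\CS(S)$. From now on assume $2\le t\le\min\{m,n\}$.

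For the ``only if'' direction, suppose $I_t(X)\in\CS(S)$. By Proposition~\ref{oldprop2}(1), $S/I_t(X)$ has a multiplicity-free multidegree, and then Theorem~\ref{freemultideg} forces $t=2$ or $t=\min\{m,n\}$. No hypothesis on $K$ is used here.

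For the ``if'' direction, assume $t=2$ or $t=\min\{m,n\}$, so that $S/I_t(X)$ has a multiplicity-free multidegree by Theorem~\ref{freemultideg}. I would first record that $I_t(X)$ is a relevant prime: it is prime by the classical theory of generic determinantal ideals; it contains no variable, since $I_t(X)$ is stable under the $\GL_m(K)\times\GL_n(K)$-action on rows and columns, the $K$-span of the orbit of any variable is all of $S_1$, and $I_t(X)\subsetneq\mathfrak m$ because $\hgt I_t(X)=(m-t+1)(n-t+1)<mn$; being prime and containing no variable it contains no monomial, in particular no monomial of multidegree $(1,\dots,1)$, hence it is relevant. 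If $K$ is algebraically closed, Proposition~\ref{oldprop2}(2) now yields $I_t(X)\in\CS(S)$. For an arbitrary field $K$ one reduces to that case: the multigraded gin is a monomial ideal and does not change under passage to an infinite field extension, radicality of a monomial ideal is the field-independent condition of being squarefree, and $I_t(X)$ remains a relevant prime with unchanged multigraded Hilbert function over $\overline K$; hence $I_t(X)\in\CS(S)$ if and only if $I_t(X)\otimes_K\overline K\in\CS(S\otimes_K\overline K)$, and the latter holds by the algebraically closed case. Alternatively, for these two values of $t$ one may simply invoke the known facts that $I_2(X)\in\CS$ (Cartwright--Sturmfels~\cite{CS}) and that ideals of maximal minors of a matrix of variables lie in $\CS$ (\cite{CDG1}).

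The only genuine point to watch is the passage from the combinatorial statement of Theorem~\ref{freemultideg} to the $\CS$ property: Proposition~\ref{oldprop2}(2) applies only to relevant primes, so the argument rests on verifying that $I_t(X)$ is a relevant prime (easy, but it must be stated), and — if one wants the statement over an arbitrary ground field — on the field-independence of the Cartwright--Sturmfels property, which itself follows from the field behaviour of the multigraded gin and of squarefreeness.
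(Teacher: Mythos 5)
Your proof is correct and follows the same route as the paper's own two-line argument: handle $t=1$ by hand, then combine Theorem~\ref{freemultideg} with Proposition~\ref{oldprop2} applied to the relevant prime $I_t(X)$. You are in fact more careful than the paper, which silently invokes Proposition~\ref{oldprop2}(2) without addressing its algebraically-closed hypothesis; your base-change reduction (or, alternatively, the citation of the direct proofs in \cite{CS} and \cite{CDG1}) closes that small gap.
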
 

\begin{proof} The case $t=1$ is obvious, so we may assume $t>1$. 
Since $I_t(X)$ is a relevant prime, the conclusion follows combining Theorem \ref{freemultideg} and Proposition \ref{oldprop2}. 
\end{proof} 

The fact that $I_2(X)$ is Cartwright-Sturmfels has been proved directly (i.e. without using Proposition \ref{oldprop2}) by Cartwright and Sturmfels in \cite{CS}, hence the name. For $I_t(X)$ with $t=\min\{m,n\}$ it has been proved directly in \cite{CDG1}.  
  
Combining Corollary \ref{CSdet} with Proposition \ref{closures}, which states that any multigraded linear section of a Cartwright-Sturmfels ideal remains Cartwright-Sturmfels, we obtain the following result, originally proved in \cite[Main Theorem]{CDG2}. 

\begin{thm}\label{detA}
Let $A=(a_{ij})$ be an $m\times n$ matrix whose entries are $\ZZ^n$-multigraded  
with $\deg(a_{ij})=e_j$ for all $i,j$.
Let $I_t(A)$ be the ideal of $t$-minors of $A$.  Then $I_t(A)$ is Cartwright-Sturmfels for  $t=1,2,\min\{m,n\}$.  
\end{thm}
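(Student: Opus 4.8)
The plan is to realise $I_t(A)$ as an iterated linear section of a generic determinantal ideal and then combine Corollary~\ref{CSdet} with Proposition~\ref{closures}. The case $t=1$ is immediate, since $I_1(A)$ is generated by $\ZZ^n$-graded linear forms and any such ideal is Cartwright--Sturmfels (its multigraded generic initial ideal is generated by distinct variables, hence radical). So assume $t\in\{2,\min\{m,n\}\}$. Let $S$ denote the standard $\ZZ^n$-graded polynomial ring containing the entries $a_{ij}$. Introduce a fresh $m\times n$ matrix of variables $X=(x_{ij})$ and set $\bar S=S[x_{ij}:1\le i\le m,\ 1\le j\le n]$, equipped with the $\ZZ^n$-grading that extends the one on $S$ by $\deg x_{ij}=e_j$. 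By Corollary~\ref{CSdet}, $I_t(X)\in\CS(K[X])$ for $t=2$ and for $t=\min\{m,n\}$.

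The first and crucial step is to show that the extended ideal $I_t(X)\bar S$ lies in $\CS(\bar S)$ --- in other words, that $\CS$ is stable under polynomial ring extension. I would obtain this from Proposition~\ref{oldprop1}. Choose a radical, Borel-fixed ideal $J\subseteq K[X]$ with $\HS(K[X]/J,Z)=\HS(K[X]/I_t(X),Z)$. Then $J\bar S$ is again radical, and $\HS(\bar S/J\bar S,Z)=\HS(K[X]/J,Z)\cdot\HS(S,Z)=\HS(\bar S/I_t(X)\bar S,Z)$. Furthermore, if the term order on $\bar S$ is chosen so that within each degree $e_j$ the new variables $x_{1j},\dots,x_{mj}$ are all larger than the variables of $S$, then every element of the Borel subgroup of $\bar S$ restricts on $K[X]$ to an element of the Borel subgroup of $K[X]$; since $J$ is a Borel-fixed monomial ideal in the $x_{ij}$, it follows that $J\bar S$ is Borel fixed in $\bar S$. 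Proposition~\ref{oldprop1} then gives $I_t(X)\bar S\in\CS(\bar S)$.

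For the second step I would descend along linear forms. The $mn$ elements $x_{ij}-a_{ij}\in\bar S$ are linearly independent $\ZZ^n$-graded linear forms, each of degree $e_j$, and $\bar S$ modulo the ideal they generate is isomorphic as a $\ZZ^n$-graded polynomial ring to $S$, the isomorphism being $x_{ij}\mapsto a_{ij}$. Killing these linear forms one at a time and applying Proposition~\ref{closures}(2) at each stage, one concludes that the image of $I_t(X)\bar S$ in $S$ lies in $\CS(S)$. Since under $x_{ij}\mapsto a_{ij}$ each $t$-minor of $X$ maps to the corresponding $t$-minor of $A$, that image is exactly $I_t(A)$, and therefore $I_t(A)\in\CS(S)$.

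I expect the main obstacle to be Step~1, the stability of $\CS$ under polynomial extension. The subtle point is that the Borel subgroup of $\bar S$ is strictly larger than that of $K[X]$ and, for the wrong choice of term order, would mix the new variables $x_{ij}$ with the variables of $S$, thereby destroying the Borel-fixedness of $J\bar S$; this is precisely why the order of the variables must be arranged as above. It is also worth emphasising that Step~1 really does require the Hilbert-series characterisation of $\CS$ in Proposition~\ref{oldprop1}, because the multigraded generic initial ideal does not commute with polynomial extension. The remaining ingredients --- linear independence of the $x_{ij}-a_{ij}$, the identification of the quotient with $S$, and the behaviour of minors under the specialisation $x_{ij}\mapsto a_{ij}$ --- are routine.
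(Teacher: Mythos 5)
Your proof is correct and follows the same route as the paper, which obtains the theorem by combining Corollary \ref{CSdet} with Proposition \ref{closures}, i.e.\ by cutting the generic determinantal ideal with the multigraded linear forms $x_{ij}-a_{ij}$. The only detail the paper leaves implicit is your Step 1 (stability of $\CS$ under adjoining variables), and your argument for it via Proposition \ref{oldprop1} --- extending a radical Borel-fixed ideal with the same Hilbert series, with the new variables placed first in each degree block --- is sound.
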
  

In particular when $t=1,2,\min\{m,n\}$ then $I_t(A)$ has all the properties of the Cartwright-Sturmfels ideals listed in Proposition \ref{oldprop}.  When $m\leq n$ every maximal minor of $A$ has a different $\ZZ^n$-degree and we obtain a more precise statement.

\begin{cor} 
Under the assumptions of Theorem \ref{detA}, if $m\leq n$ then the maximal minors of $A$ form a universal Gr\"obner basis of $I_m(A)$.
\end{cor}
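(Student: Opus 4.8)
The plan is to apply Proposition \ref{oldprop}(6) to $I_m(A)$. By Theorem \ref{detA}, since $t=m=\min\{m,n\}$, the ideal $I_m(A)$ is Cartwright-Sturmfels. Hence by Proposition \ref{oldprop}(6) every reduced Gr\"obner basis of $I_m(A)$ consists of elements of multidegree $\leq (1,\dots,1)$, and in particular $I_m(A)$ has a universal Gr\"obner basis $\mathcal{G}$ all of whose elements have multidegree $\leq (1,\dots,1)$. The task is thus to identify which polynomials of multidegree $\leq(1,\dots,1)$ can possibly lie in $I_m(A)$, and to show they are (up to scalars and obvious reductions) exactly the maximal minors.

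First I would describe the multigraded component $(I_m(A))_a$ for $a\leq(1,\dots,1)$. Write $a=e_{j_1}+\dots+e_{j_m}$ for a subset $\{j_1<\dots<j_m\}\subseteq[n]$ of size exactly $m$ (components $a$ with $|a|<m$ contain no element of $I_m(A)$, since every generator has degree $(1,\dots,1)$ in its support of size $m$; and components with some $a_i\geq 2$ are excluded by the bound). The point is that the vector space $S_a$ of multilinear polynomials of degree $a$ has dimension $\prod_{k=1}^m m = m^m$ and has a natural basis indexed by functions $[m]\to[m]$; the subspace $(I_m(A))_a$ is spanned by the single $m$-minor $M_{j_1\dots j_m}$ of $A$ using columns $j_1,\dots,j_m$ (one must check this minor is nonzero and that nothing else of that degree lies in the ideal — this follows because the $m$-minors are the degree-$(1,\dots,1)$-type generators and the ideal is generated in multidegrees $\leq(1,\dots,1)$, so its component in multidegree $a$ is exactly the span of the generators of that degree together with multiples of lower generators, but there are no lower generators). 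Consequently $(I_m(A))_a$ is either $0$ or one-dimensional, spanned by $M_{j_1\dots j_m}$.

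Given this, any element $g$ of a reduced Gr\"obner basis (or of the universal Gr\"obner basis produced by Proposition \ref{oldprop}(6)) is homogeneous of some multidegree $a\leq(1,\dots,1)$, hence lies in some one-dimensional component $(I_m(A))_a = K\cdot M_{j_1\dots j_m}$, so $g$ is a scalar multiple of the maximal minor on columns $j_1,\dots,j_m$. Therefore every reduced Gr\"obner basis of $I_m(A)$ consists, up to scalars, of maximal minors, and in particular the set of all maximal minors is a universal Gr\"obner basis of $I_m(A)$ (it generates $I_m(A)$ by definition, and it contains every element of every reduced Gr\"obner basis). I expect the only delicate point to be the clean identification of $(I_m(A))_a$ with the span of a single minor: one must rule out that some product of a lower-degree element of $I_m(A)$ with a variable lands in multidegree $a$ and fails to be proportional to $M_{j_1\dots j_m}$ — but since $I_m(A)$ has no nonzero elements in multidegrees with $|a|<m$, and the generators in multidegree with support of size $m$ are precisely the maximal minors, this is immediate; the rest is bookkeeping.
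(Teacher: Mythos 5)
Your argument is correct and is essentially the one the paper intends: by Theorem \ref{detA} and Proposition \ref{oldprop}(6) every reduced Gr\"obner basis consists of multihomogeneous elements of degree $\leq(1,\dots,1)$, and since the generators of $I_m(A)$ all have total degree $m$ with pairwise distinct squarefree $\ZZ^n$-degrees, each component $(I_m(A))_a$ with $a\leq(1,\dots,1)$ is spanned by the single minor $M_{j_1\dots j_m}$, so every reduced Gr\"obner basis element is a scalar multiple of a maximal minor. (Your side remark that $\dim_K S_a=m^m$ is not correct in the general setting of Theorem \ref{detA}, where the $a_{ij}$ are arbitrary linear forms in $S=K[x_{ij}]$, but this plays no role in the argument.)
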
  

\begin{rmk}
It is natural to ask whether the ideal $I_t(A)$ can be Cartwright-Sturmfels, under the assumptions of Theorem \ref{detA} and for $2<t<\min\{m,n\}$. The answer is yes if $A$ is very special (for example when $I_t(A)=0$) and no for a general enough $A$ (for example if $I_t(A)$ has the expected codimension). 
Nevertheless, notice that the generators of $I_t(A)$ have squarefree $\ZZ^n$-degrees, hence cannot have factors of multiplicity larger than one. This suggest that $I_t(A)$ might be radical. It turns out that this is not the case: In \cite[Example 7.2]{CW} the authors give examples of non-radical coordinate sections of determinantal ideals. 
\end{rmk} 


The multidegree of $S/I_t(A)$ for $t=2,\min\{m,n\}$ was essentially computed in \cite{CDG3}. Indeed, in that paper we computed the prime decomposition of the multigraded gin of $I_t(A)$, from which the multidegree is easily derived.

\section{Schubert determinantal ideals and matrix Schubert varieties}\label{sect:schubert}

Matrix Schubert varieties were introduced by Fulton in \cite{F}. They are defined by rank conditions. 
In this section, we show that many defining ideals of matrix Schubert varieties are Cartwright-Sturmfels. We start by fixing the notation and recalling the definitions.

Let $X=(x_{i,j})$ be an $n\times n$ matrix of variables over a field $K$ and let $S=K[X]=K[x_{ij} : 1\leq i,j\leq n]$. We consider the $\ZZ^n$-graded structure on $S$ induced by $\deg(x_{ij})=e_j\in \ZZ^n$. In the notation of Section \ref{multidegreeStructure}, this corresponds to letting $m_j=n-1$ for $j=1,\dots, n$. 
For $M$ a matrix of size $n\times n$ and $a,b\in\{1,\ldots,n\}$, let $M_{a\times b}$ be the submatrix of $M$ consisting of the entries in position $(i,j)$ where $i\leq a$ and $j\leq b$. 

Denote by $\mathcal{S}_n$ the group of permutations on the set $\{1,\ldots,n\}$ and let $\omega\in\mathcal{S}_n$. We write $\omega$ in line notation, i.e., $\omega=\omega_1\cdots\omega_n$ if $\omega(i)=\omega_i$. We associate to $\omega$ the rank function $r_\omega:\{1,\ldots,n\}^2\to\mathbb{N}$ defined by $$r_\omega(i,j)=|\{(k,\ell)\leq (i,j)\; |\; k=\omega_\ell\}|.$$
In other words, let $P_\omega$ be the permutation matrix corresponding to $\omega$, that is, $P_\omega e_j=e_{\omega_j}$. Then $r_\omega(i,j)$ is the number of ones in the submatrix $(M_\omega)_{i\times j}$.
Notice that this is the transpose of the usual definition of rank function, see e.g. \cite[Section 1.3]{KM}. We choose this notation in order to be coherent with the $\mathbb{Z}^n$-grading that we defined in Section \ref{multidegreeStructure}.

\begin{defn}
Let $m\leq n$ and let $\omega=\omega_1\cdots\omega_n\in\mathcal{S}_n$ and $\upsilon=\upsilon_1\cdots\upsilon_m\in\mathcal{S}_m$. We say that $\omega$ contains $\upsilon$ if there are 
$1\leq i_1<\ldots<i_m\leq n$ such that $\omega_{i_j}<\omega_{i_\ell}$ if and only if $\upsilon_j<\upsilon_\ell$.
Else, we say that $\omega$ avoids $\upsilon$.
A permutation $\omega$ is vexillary if it avoids the permutation $2143\in\mathcal{S}_4$.
\end{defn}

To each permutation, one associates a Rothe diagram and an essential set as follows.

\begin{defn}
The Rothe diagram associated to $\omega\in\mathcal{S}_n$ is
$$D_\omega=\{(i,j)\mid 1\leq i,j\leq n,\; \omega_j>i,\; (\omega^{-1})_i>j\}.$$
The essential set of $\omega$ is 
$$\ess(\omega)=\{(i,j)\in D_\omega\mid (i+1,j), (i,j+1)\not\in D_\omega\}.$$
\end{defn}
Notice that, as for the rank function, these are the transpose of the usual Rothe diagram and essential set of a permutation.

\begin{ex}\label{ex:1432}
Let $\omega=1432\in\mathcal{S}_4$. The permutation $1432$ is vexillary and has Rothe diagram $D_{1432}=\{(2,2),(2,3),(3,2)\}$ and essential set $\ess(1432)=\{(2,3),(3,2)\}$.
\end{ex}

\begin{defn}\label{defn:schubertvars}
Let $\omega\in\mathcal{S}_n$.
The Schubert determinantal ideal associated to $\omega$ is 
$$I_\omega=\sum_{i,j=1,\ldots,n}I_{r_\omega(i,j)+1}(X_{i\times j})\subseteq S.$$
The matrix Schubert variety associated to $\omega$ is the corresponding affine variety, i.e.
$$\mathcal{X}_\omega=\{M\in K^{n\times n}\mid {\rm rk}(M_{i\times j})\leq r_\omega(i,j) \mbox{ for all } 1\leq i,j\leq n\}.$$
\end{defn}

Notice that Schubert determinantal ideals are $\mathbb{Z}^n$-graded, since the minors that generate them are. Moreover, by \cite[Lemma 3.10]{F} we have that
$$I_\omega=\sum_{(i,j)\in\ess(\omega)}I_{r_\omega(i,j)+1}(X_{i\times j}).$$
Let \begin{equation}\label{eqn:Y}
Y_\omega=\cup_{(i,j)\in\ess(\omega)}X_{i\times j}
\end{equation} be the one-sided subladder of $X$ whose lower outside corners are the elements of the essential set of $\omega$. $Y_\omega$ is the set of variables of $X$ that appear in at least one of the generators of $I_\omega$. Consider the ideal generated in $K[Y_\omega]$ by the minors that generate $I_\omega$, that is, consider $I_\omega\cap K[Y_\omega]\subseteq K[Y_\omega]$. Then $I_\omega\cap K[Y_\omega]$ is $\mathbb{Z}^\nu$-graded, where $\nu=\max\{j\mid (i,j)\in\ess(\omega)\mbox{ for some } i\}$ is the number of columns of $Y_\omega$.

The family of Schubert determinantal ideals contains that of one-sided ladder determinantal ideals. More precisely, 
consider mixed one-sided ladder determinantal ideals. These are a generalization of the classical one-sided ladder determinantal ideals, where the ladder can have corners in the same row or column and we take minors of different sizes in different regions of the ladder, see e.g. \cite[Definition 1.4]{G07}. In \cite[Proposition 9.6]{F} it is shown that the family of mixed one-sided ladder determinantal ideals coincides with that of Schubert determinantal ideals associated to vexillary permutations. Every permutation is vexillary for $n\leq 3$ and the only non-vexillary permutation in $\mathcal{S}_4$ is $2143$. However, for large $n$, the proportion of vexillary permutations tends to zero as $n$ tends to infinity \cite{M91}. Therefore, for large enough $n$, (mixed) one-sided ladder determinantal ideals are a small subset of Schubert determinantal ideals.

\begin{ex}
Consider the permutation $\omega=1432\in\mathcal{S}_4$ from Example \ref{ex:1432}. Its Schubert determinantal ideal is $I_{1432}=I_2(X_{2\times 3})+I_2(X_{3\times 2})=I_2(Y_{1432})$ where $Y_{1432}=X_{2\times 3}\cup X_{3\times 2}$ is the subladder of $X_{3\times 3}$ consisting of its first two rows and columns. 

The ideal $I_{1432}\subseteq K[x_{ij}\mid 1\leq i,j\leq 4]$ is $\mathbb{Z}^4$-graded with respect to the grading induced by letting $\deg(x_{ij})=e_j\in\mathbb{Z}^4$. One can also regard $I_{1432}$ as an ideal in $K[Y_{1432}]=K[x_{ij}\mid 1\leq i,j\leq 3, (i,j)\neq (3,3)]$, which is $\mathbb{Z}^3$-graded graded with respect to the grading induced by letting $\deg(x_{ij})=e_j\in\mathbb{Z}^3$.
\end{ex}

The next result follows by combining a recent result by Fink, M\'esz\'aros, and St. Dizier \cite{FMS} with results by Knutson and Miller \cite{KM} and by Brion \cite{B}. It characterizes the Schubert determinantal ideals which are Cartwright-Sturmfels.

\begin{thm}\label{thm:schubert}
Assume that $K$ is algebraically closed.
Let $\omega\in\mathcal{S}_n$ and let $I_\omega\subseteq S$ be the associated Schubert determinantal ideal. The following are equivalent:
\begin{enumerate}
    \item $I_\omega\in CS(S)$,
    \item $I_\omega\cap K[Y_\omega] \in CS(K[Y_\omega])$,
    \item $\omega$ avoids the permutations 12543, 13254, 13524, 13542, 21543, 125364, 125634, 215364, 215634, 315264, 315624, and 315642.
\end{enumerate}
If this is the case, then the multigraded generic initial ideal and all the initial ideals of $I_\omega$ are Cohen-Macaulay. Moreover, $I_\omega$ has a universal Gr\"obner basis consisting of elements of multidegree $\leq (1,\ldots,1,0,\ldots,0)\in\mathbb{Z}^n$, where the number of ones appearing in the vector is equal to the number of columns of $Y_\omega$.
\end{thm}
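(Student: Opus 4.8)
The plan is to establish the equivalences $(1)\Leftrightarrow(2)\Leftrightarrow(3)$ and then the additional Cohen--Macaulayness and Gr\"obner basis assertions, relying on Proposition \ref{oldprop2}, Proposition \ref{closures}, and the combinatorial input from \cite{FMS}, \cite{KM}, \cite{B}.

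\emph{Reduction $(1)\Leftrightarrow(2)$.} First I would observe that $I_\omega$ is the extension to $S$ of the ideal $I_\omega\cap K[Y_\omega]$ under the inclusion $K[Y_\omega]\hookrightarrow S$, and that $S=K[Y_\omega][\text{remaining variables}]$ is a polynomial extension. Concretely, $I_\omega\cap K[Y_\omega]$ is obtained from $I_\omega$ by iterated application of Proposition \ref{closures}(3) with $R$ ranging over the subrings generated by the coordinate subspaces $U_i\subseteq S_{e_i}$ spanned by the variables of $Y_\omega$; conversely $I_\omega$ is recovered from $I_\omega\cap K[Y_\omega]$ by the standard fact that adjoining polynomial variables (equivalently, flat base change along $K[Y_\omega]\to S$) preserves the property of having a radical Borel-fixed gin with the prescribed multigraded Hilbert series. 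Using Proposition \ref{oldprop1}, being Cartwright--Sturmfels is detected by the multigraded Hilbert series together with the existence of a radical Borel-fixed ideal sharing it, and both the Hilbert series (up to the predictable factor coming from the extra variables) and the Borel-fixed witness transport cleanly in both directions; this gives $(1)\Leftrightarrow(2)$.

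\emph{Reduction to multiplicity-freeness and the combinatorial characterization $(2)\Leftrightarrow(3)$.} Since $K$ is algebraically closed and $I_\omega\cap K[Y_\omega]$ is a prime ideal (matrix Schubert varieties are irreducible, \cite{F}), Proposition \ref{oldprop2} tells us that $I_\omega\cap K[Y_\omega]\in CS(K[Y_\omega])$ if and only if $K[Y_\omega]/(I_\omega\cap K[Y_\omega])$ has multiplicity-free multidegree with respect to the $\ZZ^\nu$-grading, where $\nu$ is the number of columns of $Y_\omega$. Here the key external input enters: by Knutson--Miller \cite{KM} the multidegree of $S/I_\omega$ in the $\ZZ^m\oplus\ZZ^n$-grading is the double Schubert polynomial $\mathfrak S_\omega$, and specializing/coarsening to the $\ZZ^n$-grading (and then restricting to $Y_\omega$) yields a specialization of the Schubert polynomial whose monomial support and coefficients I would identify with the $\ZZ^\nu$-multidegree of $K[Y_\omega]/(I_\omega\cap K[Y_\omega])$. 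Fink--M\'esz\'aros--St.\ Dizier \cite{FMS} characterize exactly when the Schubert polynomial $\mathfrak S_\omega$ is multiplicity-free (equivalently zero-one), namely by the pattern-avoidance list for the twelve patterns $12543,13254,\dots,315642$. Translating their condition through the specialization — checking that the specialization does not create cancellation or collisions that would destroy, or accidentally restore, the zero-one property — gives $(2)\Leftrightarrow(3)$.

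\emph{The final assertions.} Assuming $(1)$--$(3)$ hold, Cohen--Macaulayness of the multigraded gin follows from Brion's theorem \cite{B} (quoted in the excerpt just after the definition of multiplicity-free multidegree, and as Proposition \ref{oldprop2}(2)'s underlying result): a relevant prime with multiplicity-free multidegree has Cohen--Macaulay quotient, and its radical Borel-fixed gin, having the same multigraded Hilbert function, is then Cohen--Macaulay as well; by Proposition \ref{oldprop}(1)--(2) every initial ideal of $I_\omega$ equals this common gin up to term order, hence is likewise Cohen--Macaulay. The universal Gr\"obner basis statement is exactly Proposition \ref{oldprop}(6) applied in $K[Y_\omega]$: reduced Gr\"obner bases of $I_\omega\cap K[Y_\omega]$ consist of elements of multidegree $\leq(1,\dots,1)\in\ZZ^\nu$, and viewing these back inside $S$ with the remaining variables carrying degree zero gives multidegree $\leq(1,\dots,1,0,\dots,0)\in\ZZ^n$ with exactly $\nu$ ones; one checks this set is still a (universal) Gr\"obner basis of $I_\omega$ after the polynomial extension.

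\emph{Main obstacle.} The routine parts are the transfer of the Cartwright--Sturmfels property along $K[Y_\omega]\hookrightarrow S$ and the deduction of the Cohen--Macaulay and Gr\"obner consequences. The genuinely delicate step is the bridge between the two multigradings in part $(2)\Leftrightarrow(3)$: I expect the main work to be verifying that coarsening the $\ZZ^m\oplus\ZZ^n$-multidegree (the double Schubert polynomial) to the $\ZZ^n$-multidegree, and then restricting to the subladder $Y_\omega$, is compatible with the Fink--M\'esz\'aros--St.\ Dizier zero-one criterion — i.e.\ that the specialization neither merges two distinct exponent vectors into one (which could turn a coefficient $1+1=2$) nor is otherwise lossy — so that zero-one-ness of $\mathfrak S_\omega$ in its natural grading is equivalent to multiplicity-freeness of the coarser multidegree. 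This is where I would spend the most care, likely exploiting that the specialization $x_i\mapsto$ (fixed values) relevant here is the one sending the double Schubert polynomial to the ordinary Schubert polynomial and that the latter's support already lives in the right number of variables.
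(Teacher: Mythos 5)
Your overall strategy matches the paper's: reduce to multiplicity-freeness of the multidegree via Proposition \ref{oldprop2} and primeness of $I_\omega$ (Fulton), then invoke the Fink--M\'esz\'aros--St.~Dizier pattern-avoidance criterion. But the step you yourself flag as the ``main obstacle'' --- relating the Schubert polynomial to the multidegree in the $\ZZ^n$-grading --- is left unproved, and your proposed route through the double Schubert polynomial and a coarsening specialization is not how it is resolved. The paper uses two precise inputs: (a) \cite[Theorem A]{KM} already identifies the \emph{single} Schubert polynomial $\mathfrak{S}_\omega$ with the \emph{dual} multidegree $\Deg^*_{S/I_\omega}$ in the one-sided $\ZZ^n$-grading (this is exactly why the paper transposes the usual rank-function and Rothe-diagram conventions), so no specialization of the double Schubert polynomial and no cancellation analysis are needed; and (b) \cite[Lemma 2.3]{CDG4}, which shows that for a relevant prime the multidegree $\Deg_M$ and the dual multidegree $\Deg^*_M$ are two encodings of the same numerical data, so $\Deg_M$ is multiplicity-free if and only if $\Deg^*_M$ has only coefficients $0$ and $1$. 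Your proposal never separates these two invariants --- Proposition \ref{oldprop2} is a statement about $\Deg_M$, while Knutson--Miller and FMS concern $\Deg^*_M$ --- and without input (b) the equivalence you need does not follow. Note also that the paper does not prove $(1)\Leftrightarrow(2)$ directly: it proves $(1)\Rightarrow(3)$, $(2)\Rightarrow(3)$, and $(3)\Rightarrow(1),(2)$, using that $S/I_\omega$ and $K[Y_\omega]/(I_\omega\cap K[Y_\omega])$ have the same dual multidegree. Your direct reduction via Proposition \ref{closures}(3) and polynomial extension is plausible, but you must address the change of grading group from $\ZZ^n$ to $\ZZ^\nu$, which you only gesture at.

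Two of your justifications for the final assertions are incorrect as stated. Cohen--Macaulayness of the generic initial ideal does not follow from its ``having the same multigraded Hilbert function'' as $S/I_\omega$ --- Hilbert functions do not detect Cohen--Macaulayness; the paper gets it directly from \cite[Theorem 1]{B}, which asserts that the multiplicity-free degeneration itself is reduced and Cohen--Macaulay. Likewise, Proposition \ref{oldprop}(1)--(2) does not say that every initial ideal of $I_\omega$ ``equals this common gin up to term order'': only the \emph{generic} initial ideals coincide, while the various $\inid_\tau(I_\omega)$ may differ, so your deduction of Cohen--Macaulayness for all initial ideals does not go through as written.
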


\begin{proof}
By \cite[Theorem 4.8]{FMS}, the Schubert polynomial of $\omega$ is multiplicity-free if and only if $\omega$ avoids the permutations 12543, 13254, 13524, 13542, 21543, 125364, 125634, 215364, 215634, 315264, 315624, and 315642. Moreover, the Schubert polynomial of $\omega$ coincides with the dual multidegree of $S/I_\omega$ by \cite[Theorem A]{KM}. Therefore, $\omega$ avoids the 12 permutations listed above if and only if the only coefficients appearing in the dual multidegree of $S/I_\omega$ are zero and one. Notice moroever that $S/I_\omega$ and $K[Y_\omega]/I_\omega\cap K[Y_\omega]$ have the same dual multidegree.

If $I_\omega$ is Cartwright-Sturmfels, then $S/I_\omega$ and $K[Y_\omega]/I_\omega\cap K[Y_\omega]$ have multiplicity-free multidegree by Proposition \ref{oldprop2}. Moroever, the ideal $I_\omega\subseteq S$ is prime by \cite[Proposition 3.3]{F}. In \cite[Lemma 2.3]{CDG4}, we discussed the relation between the multidegree $\Deg_M(Z)$ and the dual multidegree $\Deg^*_M(Z)$. In particular we showed that, under our assumptions, they are two different encodings of the same numerical data. In particular,
$\Deg_M(Z)$ is multiplicity-free if and only if the only coefficients in $\Deg^*_M(Z)$ are zero and one. This proves that (1) implies (3) and (2) implies (3).

Conversely, suppose that (3) holds. Then $S/I_\omega$ and $K[Y_\omega]/I_\omega\cap K[Y_\omega]$ have multiplicity-free multidegrees by \cite[Lemma 2.3]{CDG4}. Since $I_\omega\subseteq S$ is prime, the multigraded generic initial ideal of $I_\omega$ is radical and Cohen-Macaulay by \cite[Theorem 1]{B} (see also \cite[Theorem 1.11]{CDG4} for a formulation in our terminology). This proves (1). The same argument proves (2).
The rest of the statement follows from Proposition \ref{oldprop}.
\end{proof}

The next result follows by combining Theorem \ref{thm:schubert} and Proposition \ref{closures}. 

\begin{cor}
Let $S=K[x_{ij}\mid 1\leq j\leq n,\ 0\leq i\leq m_j]$ be endowed with the standard $\ZZ^n$-grading induced by $\deg(x_{ij})=e_j\in\ZZ^n$ and assume that $K$ is algebraically closed. 
Let $A=(a_{ij})$ be an $n\times n$
matrix whose entries are $\ZZ^n$-multigraded with $\deg(a_{ij})=e_j\in\mathbb{Z}^n$ for all $i,j$.
Let $\omega\in\mathcal{S}_n$ and assume that $\omega$ avoids the permutations 12543, 13254, 13524, 13542, 21543, 125364, 125634, 215364, 215634, 315264, 315624, and 315642.
Let $$I_\omega(A)=\sum_{i,j=1,\ldots,n}I_{r_\omega(i,j)+1}(A_{i\times j})
=\sum_{(i,j)\in\ess(\omega)}I_{r_\omega(i,j)+1}(A_{i\times j})\subseteq S.$$ Then $I_\omega(A)$ is Cartwright-Sturmfels.
\end{cor}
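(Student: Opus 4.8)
The plan is to reduce the statement to the generic case treated in Theorem \ref{thm:schubert} by exhibiting $I_\omega(A)$ as an iterated multigraded linear section of the generic Schubert determinantal ideal, in the same way that Theorem \ref{detA} is obtained from Corollary \ref{CSdet}. Concretely, I would introduce an $n\times n$ matrix $Y=(y_{ij})$ of fresh variables, set $T=K[Y]$ with $\deg y_{ij}=e_j\in\ZZ^n$, and observe that since $\omega$ avoids the twelve permutations listed above, the equivalence of (1) and (3) in Theorem \ref{thm:schubert} gives $I_\omega(Y)\in\CS(T)$.

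Next I would pass to the standard $\ZZ^n$-graded polynomial ring $\widetilde S=S\otimes_K T=T[\,\text{variables of }S\,]$, equipped with a term order in which, within each column $j$, the variables $y_{1j},\dots,y_{nj}$ are all larger than every variable of $S$ of degree $e_j$ (the latter keeping their prescribed order). The key sublemma is that $I_\omega(Y)\widetilde S\in\CS(\widetilde S)$: by Proposition \ref{oldprop1} one picks a radical Borel-fixed ideal $J\subseteq T$ with the same multigraded Hilbert series as $I_\omega(Y)$; then $J\widetilde S$ is radical, since $\widetilde S/J\widetilde S\cong (T/J)[\,\text{variables of }S\,]$ is a polynomial ring over the reduced ring $T/J$; it has the same multigraded Hilbert series as $I_\omega(Y)\widetilde S$, since extending from $T$ to $\widetilde S$ multiplies both series by the same factor; and it is Borel-fixed in $\widetilde S$, which is exactly where the choice of order is used: it forces every element $g$ of the Borel subgroup of $\widetilde S$ to send each $y_{ij}$ into $\langle y_{1j},\dots,y_{ij}\rangle$, hence to map $T$ into $T$ and to restrict to an element of the Borel subgroup of $T$, under which $J$ is stable. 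Proposition \ref{oldprop1} then yields $I_\omega(Y)\widetilde S\in\CS(\widetilde S)$.

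Finally, since $a_{ij}\in S_{e_j}$, each $\ell_{ij}=y_{ij}-a_{ij}$ is a nonzero $\ZZ^n$-graded linear form of $\widetilde S$. Adding the $\ell_{ij}$ one at a time and applying the second assertion of Proposition \ref{closures}(2) at each step shows that the image of $I_\omega(Y)\widetilde S$ in the quotient $\widetilde S/(\ell_{ij}\mid 1\le i,j\le n)$ is Cartwright-Sturmfels. Under the identification $\widetilde S/(\ell_{ij})\cong S$ given by $y_{ij}\mapsto a_{ij}$, this quotient map carries each $t$-minor of $Y_{i\times j}$ to the corresponding $t$-minor of $A_{i\times j}$, hence carries $I_\omega(Y)=\sum_{(i,j)\in\ess(\omega)}I_{r_\omega(i,j)+1}(Y_{i\times j})$ onto $I_\omega(A)=\sum_{(i,j)\in\ess(\omega)}I_{r_\omega(i,j)+1}(A_{i\times j})$; therefore $I_\omega(A)\in\CS(S)$. (Equivalently, one can conclude via Proposition \ref{closures}(3), since $S\hookrightarrow\widetilde S$ realizes $S$ as $K[U_1,\dots,U_n]$ with $U_j=S_{e_j}$ and $\bigl(I_\omega(Y)\widetilde S+(\ell_{ij})\bigr)\cap S=I_\omega(A)$.)

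The step I expect to be the real content is the extension-stability sublemma in the second paragraph: checking that enlarging the ambient ring from $T$ to $\widetilde S$ preserves the Cartwright-Sturmfels property, which comes down to the order-sensitive verification that the radical model ideal $J$ extends to a Borel-fixed ideal of $\widetilde S$. Once that is in place, the rest of the argument is a routine iteration of Proposition \ref{closures}.
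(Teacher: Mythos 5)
Your argument is correct and is exactly the route the paper intends: the paper gives no written proof beyond the remark that the corollary ``follows by combining Theorem \ref{thm:schubert} and Proposition \ref{closures}'', i.e.\ the generic Schubert ideal $I_\omega(Y)$ is Cartwright-Sturmfels by the avoidance hypothesis and one then specializes $y_{ij}\mapsto a_{ij}$ by iterated multigraded linear sections, just as Theorem \ref{detA} is deduced from Corollary \ref{CSdet}. Your extension-stability sublemma (that $I_\omega(Y)\widetilde S\in\CS(\widetilde S)$, verified via Proposition \ref{oldprop1} and a Borel-fixedness check for the extended radical model) is a correct filling-in of the one step the paper leaves implicit.
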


In \cite{HPW} Hamaker, Pechenik, and Wiegand study the following system of generators for Schubert determinantal ideals.

\begin{defn}
Let $X^\prime$ be the matrix obtained from $X=(x_{ij})$ by specializing $x_{ij}$ to $0$ whenever $r_\omega(i,j)=0$. The CDG generators of $I_\omega$ are the elements of the set
$$\{x_{ij}\mid r_\omega(i,j)=0\}\cup\{(r_\omega(i,j)+1)-\mbox{minors of } X^\prime_{i\times j}\mid (i,j)\in\ess(\omega)\}.$$
\end{defn}

In their paper, Hamaker, Pechenik, and Weigandt formulate the following conjecture, which was later proved by Klein in \cite{K}.

\begin{thm}[Conjecture 7.1 in \cite{HPW}, Corollaries 3.17 and 4.2 in \cite{K}]\label{thm:CGDgens}
Let $\omega\in\mathcal{S}_n$. The CDG generators are a diagonal Gr\"obner basis for $I_\omega$ if and only if $\omega$ avoids the permutations 13254, 21543, 214635, 215364, 215634, 241635, 315264, and 4261735.
\end{thm}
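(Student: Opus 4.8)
The plan is to prove the two implications separately; fix throughout a diagonal term order $<$ on $S$, i.e.\ one for which the leading term of every minor of $X$ is the product of its main-diagonal entries. First I would record that the CDG generators lie in $I_\omega$ and generate it: each variable $x_{ij}$ with $r_\omega(i,j)=0$ belongs to $I_\omega$, and modulo these variables every Fulton essential minor of $X_{i\times j}$ coincides with the corresponding minor of $X'_{i\times j}$; since the Fulton essential minors generate $I_\omega$ by \cite[Lemma 3.10]{F}, so do the CDG generators. Consequently the CDG generators form a diagonal Gr\"obner basis if and only if the monomial ideal $L=\langle \inid_<(g) : g\text{ a CDG generator}\rangle$ equals $\inid_<(I_\omega)$; since the inclusion $L\subseteq \inid_<(I_\omega)$ always holds, this is equivalent, by comparing Hilbert functions, to $\dim_K(S/L)_d=\dim_K(S/I_\omega)_d$ for every $d$.

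For the implication ``$\omega$ avoids the eight patterns $\Rightarrow$ diagonal Gr\"obner basis'' I would prove this Hilbert function identity combinatorially. The right-hand side is governed by the Knutson--Miller theory of matrix Schubert varieties: $S/I_\omega$ is Cohen--Macaulay and its multigraded Hilbert series is controlled by the Schubert polynomial of $\omega$, so $\dim_K(S/I_\omega)_d$ is a count of certain pipe dreams of $\omega$. The left-hand side is the number of degree-$d$ monomials divisible by none of the leading monomials of the CDG generators, namely the zeroed variables together with the surviving main-diagonal products of the minors of the matrices $X'_{i\times j}$ with $(i,j)\in\ess(\omega)$. The core step is a weight-preserving bijection, or an explicit inclusion--exclusion, matching these two families; pattern avoidance enters exactly here, since the natural correspondence degenerates precisely when one of the eight patterns occurs inside $\omega$.

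For the converse I would show that if $\omega$ contains one of the eight patterns then $L\subsetneq\inid_<(I_\omega)$, so that the CDG generators are not a diagonal Gr\"obner basis. The natural approach is ``pattern pushing'': first dispose of the eight minimal cases by a finite direct computation, exhibiting for each an $S$-pair of CDG generators whose remainder is nonzero, equivalently a monomial in $\inid_<(I_\omega)\setminus L$; then propagate the obstruction to an arbitrary $\omega$ containing the pattern, using that restriction to the rows and columns that realize the pattern is compatible with forming the rank function, the essential set, the zeroed matrix $X'$, and the diagonal leading term, so that a localized failure of Gr\"obner-ness lifts to $\omega$.

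The hard part will be the forward implication: matching, on the nose, the standard monomials of $L$ with the Knutson--Miller count for $S/I_\omega$ and understanding why the match breaks for exactly these eight patterns and no others. A cleaner route---essentially the one carried out in \cite{K}---is to bypass the explicit count by a geometric vertex decomposition argument: degenerate $I_\omega$ one variable at a time along $<$, verify that the CDG generating set is preserved under each degeneration, and show that the resulting scheme is geometrically vertex decomposable, hence Cohen--Macaulay with a squarefree initial ideal, precisely when $\omega$ avoids the eight patterns. Identifying the inductive obstruction with this pattern list, together with the accompanying Cohen--Macaulayness of the degenerations, is the crux.
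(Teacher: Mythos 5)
There is nothing in the paper to compare your argument against: Theorem \ref{thm:CGDgens} is not proved there at all, but imported verbatim from the literature --- it was Conjecture 7.1 of \cite{HPW} and was proved by Klein in \cite{K} (Corollaries 3.17 and 4.2). So your proposal has to stand on its own as a proof of what was, until recently, an open conjecture, and as it stands it is a roadmap rather than a proof. Your preliminary reductions are fine (the CDG generators do generate $I_\omega$, and Gr\"obner-ness is equivalent to the Hilbert function identity $\dim_K(S/L)_d=\dim_K(S/I_\omega)_d$), and you correctly identify that the route actually taken in \cite{K} goes through geometric vertex decomposition rather than an explicit bijection. But both directions of your argument stop exactly where the work begins: for the forward implication you defer the ``core step'' (matching standard monomials of $L$ against the Knutson--Miller pipe-dream count, or alternatively verifying the geometric vertex decomposition inductively and identifying the obstruction with the eight patterns) and explicitly label it ``the hard part'' and ``the crux''; that step is the entire content of Klein's paper and cannot be waved at.

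The converse direction also has a concrete gap. You assert that a failure of Gr\"obner-ness for one of the eight minimal patterns ``lifts'' to any $\omega$ containing it because restriction to the rows and columns realizing the pattern is ``compatible with'' the rank function, the essential set, the matrix $X'$, and the diagonal leading terms. None of this is automatic: the essential set and rank conditions of $\omega$ do not restrict in any simple way to those of an embedded pattern (extra essential boxes of $\omega$ can sit inside the chosen rows and columns and impose additional rank conditions), and the zeroing pattern of $X'$ for $\omega$ need not restrict to that of the pattern. Making pattern-containment arguments work for Gr\"obner degenerations requires a genuine lemma (this is why \cite{HPW} and \cite{K} devote real effort to it), and without it the ``only if'' direction is unproved. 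If you intend to use this theorem in the present context, the honest course is to cite \cite{K}, as the paper does.
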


Combining Theorem \ref{thm:schubert} and Theorem \ref{thm:CGDgens}, one obtains the following immediate corollary.

\begin{cor}
Let $\omega\in\mathcal{S}_n$. If $I_\omega$ is Cartwright-Sturmfels, then the CDG generators are a diagonal Gr\"obner basis.
\end{cor}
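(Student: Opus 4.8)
The plan is to reduce this corollary to the two theorems by comparing the two lists of avoided permutations. By Theorem \ref{thm:schubert}, $I_\omega$ being Cartwright-Sturmfels is equivalent to $\omega$ avoiding the twelve permutations
$$\mathcal{A}=\{12543, 13254, 13524, 13542, 21543, 125364, 125634, 215364, 215634, 315264, 315624, 315642\},$$
while by Theorem \ref{thm:CGDgens} the CDG generators form a diagonal Gr\"obner basis precisely when $\omega$ avoids the eight permutations
$$\mathcal{B}=\{13254, 21543, 214635, 215364, 215634, 241635, 315264, 4261735\}.$$
So the statement ``$I_\omega\in CS(S)$ implies the CDG generators are a diagonal Gr\"obner basis'' amounts to: if $\omega$ avoids every element of $\mathcal{A}$, then $\omega$ avoids every element of $\mathcal{B}$. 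Contrapositively, it suffices to show that each permutation in $\mathcal{B}$ \emph{contains} some permutation in $\mathcal{A}$ as a pattern; then any $\omega$ containing a member of $\mathcal{B}$ automatically contains a member of $\mathcal{A}$, hence is not Cartwright-Sturmfels.

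The key step is therefore a finite pattern-containment check for each of the eight permutations in $\mathcal{B}$. Four of them, namely $13254$, $21543$, $215364$, and $315264$, already lie in $\mathcal{A}$, so they trivially contain a member of $\mathcal{A}$. For $215634$, deleting the entry $3$ (in position $4$) and relabelling yields the pattern $21453$ on the values $\{2,1,5,6,4\}\to$; more carefully, one exhibits indices $i_1<\cdots<i_5$ whose values order-isomorphically match $21543$ or another element of $\mathcal{A}$: the subword $21564$ of $215634$ (positions $1,2,3,5,6$) is order-isomorphic to $21543$. For the length-six permutation $214635$, one checks that some $5$-element subword is order-isomorphic to a member of $\mathcal{A}$; e.g. the subword obtained by deleting the value $2$ gives $14635\sim 13524$. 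Similarly $241635$ contains, after deleting an appropriate entry, a pattern in $\mathcal{A}$ (the subword $24635$ or $21635$ matching $13524$ or $21543$ up to relabelling). The length-seven permutation $4261735$ requires deleting two entries to land on a $5$-element subword order-isomorphic to one of the twelve; a direct search over its $\binom{7}{5}=21$ subwords locates one.

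The main obstacle is simply the bookkeeping: one must verify, for $214635$, $215634$, $241635$, and $4261735$, that an explicit order-isomorphic copy of some member of $\mathcal{A}$ sits inside, and present at least one witness subword in each case. This is a routine but slightly tedious finite verification, with no conceptual difficulty once the reduction to pattern containment is made. I would organize the write-up as: state the reduction (``it suffices that every permutation in $\mathcal{B}$ contains a permutation in $\mathcal{A}$''), note the four that are literally in $\mathcal{A}$, and then for each of the remaining four exhibit a concrete subword and the element of $\mathcal{A}$ it realizes, after which the corollary follows immediately by combining Theorem \ref{thm:schubert} and Theorem \ref{thm:CGDgens}.
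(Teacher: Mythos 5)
Your reduction is exactly the one the paper uses: since both theorems are pattern-avoidance characterizations, it suffices to check that every permutation in the list of Theorem \ref{thm:CGDgens} contains some permutation from the list of Theorem \ref{thm:schubert}, and your witnesses for $214635$ and $241635$ (both contain $13524$) are correct. However, two of your verifications are wrong. First, $215634$ needs no argument at all --- it is literally one of the twelve permutations in Theorem \ref{thm:schubert} (you list only four coincidences between the two lists, but there are five) --- and the argument you offer for it is false: the subword of $215634=2,1,5,6,3,4$ at positions $1,2,3,5,6$ is $2,1,5,3,4$, which is order-isomorphic to $21534$, not to $21543$; in fact $215634$ does not contain $21543$ at all, since after the initial descent $2,1$ the remaining entries $5,6,3,4$ admit no decreasing subsequence of length $3$.

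Second, and more seriously, your treatment of $4261735$ would fail as stated. You claim that a direct search over the $\binom{7}{5}=21$ five-element subwords locates one order-isomorphic to a pattern in the Cartwright-Sturmfels list, but $4261735=4,2,6,1,7,3,5$ contains none of the five-letter patterns $12543$, $13254$, $13524$, $13542$, $21543$: for instance, its only decreasing subsequence of length $3$ is $4,2,1$, which sits at the very start and so cannot serve as the final descending triple required by $12543$ or $21543$, and the remaining three patterns are excluded by similarly short case checks. The witness must be a \emph{six}-element subword: deleting the entry $1$ yields $4,2,6,7,3,5$, which is order-isomorphic to $315624$, one of the twelve. This is precisely the containment the paper records. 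So your method is sound and identical to the paper's, but the finite verification you describe is not the one that works for $4261735$, and as written that case of your proof is incorrect.
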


\begin{proof}
Notice that the permutations 214635 and 241635 contain 13524 and the permutation 4261735 contains 315624. Therefore, if $\omega$ avoids the list of permutations in the statement of Theorem \ref{thm:schubert}, then it also avoids the permutations listed in Theorem \ref{thm:CGDgens}.
\end{proof}

By comparing the lists of permutations in the statements of Theorem \ref{thm:schubert} and Theorem \ref{thm:CGDgens}, one sees immediately that there are Schubert determinantal ideals which are not Cartwright-Sturmfels, but whose CDG generators are a diagonal Gr\"obner basis. 

\begin{ex}
Let $\omega=214635\in\mathcal{S}_6$ and let $I_\omega=(x_{11})+I_3(X_{3\times 4})+I_4(X_{5\times 4})$ be the associated Schubert determinantal ideal. Since the generators of $I_\omega$ are minors of $Y_\omega=X_{5\times 4}$, we may replace $X$ by $X_{5\times 4}$ and let $S=K[X_{5\times 4}]=K[x_{ij}\mid 1\leq i\leq 5, 1\leq j\leq 4]$. In particular, $S$ and $I_\omega$ are $\mathbb{Z}^4$-graded by letting $\deg(x_{ij})=e_j\in\mathbb{Z}^4$.

The dual multidegree of $S/I_\omega$ is $Z_1^3 Z_2 + Z_1^2 Z_2^2 + Z_1^3 Z_3 + 2Z_1^2 Z_2 Z_3 + Z_1 Z_2^2 Z_3 + Z_1^2 Z_3^2 + Z_1 Z_2 Z_3^2 + Z_1^3 Z_4 + 2Z_1^2 Z_2 Z_4 + Z_1 Z_2^2 Z_4 + 2Z_1^2 Z_3 Z_4 + 2Z_1 Z_2 Z_3 Z_4 + Z_1 Z_3^2 Z_4 + Z_1^2 Z_4^2 + Z_1 Z_2 Z_4^2 + Z_1 Z_3 Z_4^2$, in particular it is not multiplicity-free, so $I_\omega$ is not Cartwright-Sturmfels.

The CGD generators of $I_\omega$ are $x_{1,1}$, the 3-minors of $X^\prime_{3\times 4}$, and the 4-minors of $X^\prime_{5\times 4}$,
where
$$X^\prime=\begin{pmatrix}
0 & x_{12} & x_{13} & x_{14} \\
x_{21} & x_{22} & x_{23} & x_{24} \\ 
x_{31} & x_{32} & x_{33} & x_{34} \\
x_{41} & x_{42} & x_{43} & x_{44} \\
x_{51} & x_{52} & x_{53} & x_{54}
\end{pmatrix}.$$
Fix the lexicographic order with $x_{ij}>x_{k\ell}$ if either $i<k$ or $i=k$ and $j<\ell$. This is a diagonal term order. 
One can check by direct computation that the CDG generators are a Gr\"obner basis of $I_\omega$, e.g. by checking that the ideal generated by their leading terms has the same $\mathbb{Z}$-graded Hilbert series as $I_\omega$.
\end{ex}

\section{Binomial edge ideals}
\label{BinCS}

The next theorem appeared first as \cite[Theorem 2.1]{CDG3}.  Giulia Gaggero \cite{GG} pointed out to us that the proof given in \cite{CDG3} contains a mistake. Indeed the equation 
$$u (x_2 F_{1n}- x_1 F_{2n})=ux_nF_{12}$$
that is used in \cite[p. 242]{CDG3} is not correct. The problem  comes from the fact that in the proof we treated the $F_{ij}$ as if they were the $2$-minors of the matrix $\phi(X)$ (notations as in \cite{CDG3}) but that it is true only up to a scalar that has been used to make them monic, hence the mistake. Here we present a correct and somehow simpler proof of \cite[Theorem 2.1]{CDG3}. 

Let us set up the notation. Let $G$ be a graph on the vertex set $\{1,\dots, n\}$ and let $X$ be the $2\times n$ matrix of variables
$$X=\left(
\begin{array}{cccc}
x_1 & x_2 & \cdots & x_n \\
y_1 & y_2 & \cdots & y_n
\end{array} \right).$$
Denote by $\Delta_{ij}$ the $2$-minor of $X$ corresponding to columns $i,j$, i.e.,
$\Delta_{ij}=x_i y_j-x_j y_i$. We consider the binomial edge ideal of $G$
$$J_G=(  \Delta_{ij} : \{i,j\} \mbox{ is an edge of } G )$$
of $S=K[x_1,\dots, x_n, y_1,\dots, y_n]$. Binomial edge ideals were introduced in \cite{HHHKR} and \cite{O}. 
We consider the $\ZZ^n$-graded structure on $S$ induced by letting $\deg(x_i)=\deg(y_i)=e_i\in\ZZ^n$. 
 
\begin{thm}[\cite{CDG3}, Theorem 2.1]
The multigraded generic initial ideal of $J_G$ is generated by the monomials $y_{a_1}\cdots y_{a_v}x_i x_j$, where $i, a_1,  \cdots,  a_v, j$ is a path in $G$. 
In particular $J_G$ is a Cartwright-Sturmfels ideal, therefore all the initial ideals of $J_G$ are radical and $\reg(J_G)\leq n$. 
\end{thm}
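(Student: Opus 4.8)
The plan is to write down the expected generic initial ideal explicitly and then feed it into the machinery of Section~\ref{SectCS}. Put
$$J=\bigl(\,y_{a_1}\cdots y_{a_v}\,x_i x_j\ :\ i,a_1,\dots,a_v,j\text{ is a path in }G\,\bigr)\subseteq S,$$
the ideal generated by the monomials in the statement. Two properties of $J$ are immediate. First, the vertices of a path are pairwise distinct, so every generator of $J$ is squarefree and $J$ is radical. Second, $J$ is strongly stable, hence Borel fixed: if a generator attached to a path $i,a_1,\dots,a_v,j$ is divisible by $y_{a_t}$, then replacing $y_{a_t}$ by $x_{a_t}$ gives a multiple of the monomial $x_i y_{a_1}\cdots y_{a_{t-1}}x_{a_t}$ attached to the subpath $i,a_1,\dots,a_t$, which again belongs to $J$, and the analogous check for $x_i$ and $x_j$ is trivial. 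Granting for the moment that $J$ and $J_G$ have the same multigraded Hilbert series, the whole theorem follows formally: Proposition~\ref{oldprop1} gives $J_G\in\CS(S)$, Proposition~\ref{oldprop}(1) gives $\gin_\tau(J_G)=J$ for every term order $\tau$, and the radicality of all initial ideals together with the bound $\reg(J_G)\le n$ are Proposition~\ref{oldprop}(2) and (3). So the entire content of the theorem is the equality of multigraded Hilbert series of $S/J_G$ and $S/J$.

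To establish this I would fix a term order $\sigma$ (any one compatible with the standing convention $x_i>y_i$ works, e.g.\ a diagonal one) together with a generic $g\in\GL_2(K)^n$, and prove that $\inid_\sigma(gJ_G)=J$; this suffices because $\HS_{S/J_G}=\HS_{S/gJ_G}=\HS_{S/\inid_\sigma(gJ_G)}$. After the generic coordinate change each edge $\{i,j\}$ contributes a generic ``mixed'' quadric
$$q_{ij}=\alpha_{ij}\,x_i x_j+\beta_{ij}\,x_i y_j+\gamma_{ij}\,x_j y_i+\delta_{ij}\,y_i y_j$$
of multidegree $e_i+e_j$, with $\inid_\sigma(q_{ij})=x_i x_j$. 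For each path $\pi\colon i,a_1,\dots,a_v,j$ I would then build an explicit element $f_\pi\in gJ_G$ of multidegree $e_i+e_{a_1}+\dots+e_{a_v}+e_j$ by iterating the $S$-polynomial construction along $q_{ia_1},q_{a_1a_2},\dots,q_{a_vj}$ and reducing, and I would check that $\inid_\sigma(f_\pi)=y_{a_1}\cdots y_{a_v}x_i x_j$. The delicate point, and precisely the one that was mishandled in \cite{CDG3} (where the normalized transformed generators were treated as if the scalars $\alpha,\beta,\gamma,\delta$ did not matter), is that the coefficient of this monomial in $f_\pi$ is a nonzero polynomial in the entries of $g$ — which one verifies by a convenient specialization of $g$ — so it is nonzero for generic $g$. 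Once all the $f_\pi$ are in hand, I would verify by Buchberger's criterion that $\{f_\pi:\pi\text{ a path in }G\}$ is a Gr\"obner basis of $gJ_G$: the $S$-polynomial of $f_\pi$ and $f_{\pi'}$ reduces to zero, the reduction being governed by the combinatorics of concatenating and truncating paths in $G$ — now in generic coordinates, so that \emph{all} paths occur, in contrast with the classical Gr\"obner basis of $J_G$ in the given coordinates, which only involves the admissible paths of \cite{HHHKR}. This yields $\inid_\sigma(gJ_G)=\bigl(\inid_\sigma(f_\pi):\pi\bigr)=J$, hence the desired equality of Hilbert series.

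The main obstacle is the pair of steps in the previous paragraph: producing $f_\pi$ with the prescribed leading monomial and, above all, showing that its leading coefficient does not vanish identically in $g$ (this is where genericity is essential and where the scalars must be tracked carefully), together with the $S$-pair bookkeeping needed for Buchberger's criterion. Everything around these steps — the two properties of $J$ and the passage to the $\CS$ statement and its consequences — is immediate from Section~\ref{SectCS}.
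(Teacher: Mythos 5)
Your overall strategy is the same as the paper's: write down the candidate monomial ideal $J$, observe that it is squarefree and Borel-fixed (your subpath verification of strong stability is correct), and reduce the whole theorem to showing that $J$ is an initial ideal of a coordinate transform of $J_G$, after which Propositions \ref{oldprop1} and \ref{oldprop} give everything else. That reduction is sound, and it is exactly how the paper proceeds; the paper even gets by with a single upper-triangular substitution $\phi(y_i)=\alpha_i x_i+y_i$ with pairwise distinct $\alpha_i$, since Proposition \ref{oldprop1} only requires exhibiting \emph{one} radical Borel-fixed ideal with the right Hilbert series.

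The gap is that the two steps you defer ("I would build $f_\pi$\dots", "I would verify by Buchberger's criterion\dots") are the entire mathematical content of the theorem, and they are precisely where the published proof in \cite{CDG3} went wrong. Concretely: (i) the Gr\"obner basis elements are not the output of an unspecified iterated $S$-polynomial construction; they are simply $y_{a_1}\cdots y_{a_v}F_{ij}$, where $F_{ij}=x_ix_j-\lambda_{ij}\Delta_{ij}$ with $\lambda_{ij}=(\alpha_i-\alpha_j)^{-1}$ is the normalized transform of the minor on the two \emph{endpoints} of the path --- and since $\{i,j\}$ is usually not an edge, even the membership $y_{a}F_{ij}\in\phi(J_G)$ needs an induction via the relation $(z_{1i},z_{2i})\Delta_{jk}\subseteq(\Delta_{ij},\Delta_{ik})$; (ii) the reduction to zero of $S(F_{ik},F_{jk})$ modulo $F_{ij},F_{ik},F_{jk}$ holds only because the scalar identity $-\lambda_{jk}\lambda_{ik}+\lambda_{jk}\lambda_{ij}-\lambda_{ik}\lambda_{ij}=0$ conspires with the syzygy $y_i\Delta_{jk}-y_j\Delta_{ik}+y_k\Delta_{ij}=0$. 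If the coefficients $\alpha_{ij},\beta_{ij},\gamma_{ij},\delta_{ij}$ of your quadrics $q_{ij}$ were genuinely independent generic scalars, the $S$-pairs would \emph{not} reduce to zero; what saves the computation is that all the $q_{ij}$ are minors of one and the same transformed $2\times n$ matrix, which imposes exactly the relations needed. You correctly flag that "the scalars must be tracked carefully," but you do not track them, and the nonvanishing-by-specialization argument you sketch addresses only the leading coefficient of $f_\pi$, not the vanishing of the remainders in Buchberger's criterion. As written, the proposal is a correct plan rather than a proof.
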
 

\begin{proof} 
Consider any term order such that $x_i>y_i$ for all $i$. 
To compute the generic initial ideal, we first apply a multigraded upper triangular transformation $\phi$  to $J_G$, i.e., for every $i$ we have $\phi(x_i)=x_i$ and $\phi(y_i)=\alpha_ix_i+y_i$ with $\alpha_i\in K$.  We obtain the matrix 
$$\phi(X)=\left(
\begin{array}{cccc}
x_1 & x_2 & \cdots & x_n \\
\alpha_1x_1+y_1 & \alpha_2x_2+y_2 & \cdots & \alpha_nx_n+y_n 
\end{array}
\right)$$
whose $2$-minors are
$$\phi(\Delta_{ij})=  \left | 
\begin{array}{cc}
x_i & x_j   \\
\alpha_ix_i+y_i & \alpha_jx_j+y_j  
\end{array}
\right |= (\alpha_j-\alpha_i)x_ix_j+\Delta_{ij}.
$$
Assume that $\alpha_j\neq \alpha_i$ for $i\neq j$. We multiply  $\phi(\Delta_{ij})$ by the inverse of $\alpha_j-\alpha_i$ and obtain
$$F_{ij}=x_ix_j-\lambda_{ij}\Delta_{ij}$$
with 
$$\lambda_{ij}=(\alpha_i-\alpha_j)^{-1},$$
so that $F_{ij}$ is monic. 
For later reference, we observe the following: for indices $1\leq i<j<k\leq n$, consider the
$S$-polynomial $S(F_{ik}, F_{jk})$. Expanding   $S(F_{ik}, F_{jk})$  we have
$$S(F_{ik}, F_{jk})=x_jF_{ik}-x_iF_{jk}=-\lambda_{jk}y_jx_ix_k+\lambda_{ik}y_ix_jx_k+(\lambda_{jk}-\lambda_{ik})y_kx_ix_j.$$ 
Performing division with reminder by $F_{ik}, F_{jk}, F_{ij}$ we obtain
$$S(F_{ik}, F_{jk})=-\lambda_{jk}y_jF_{ik}+\lambda_{ik}y_iF_{jk}+
(\lambda_{jk}-\lambda_{ik})y_kF_{ij}+r.$$
The remainder $r$ is 
$$r=-\lambda_{jk}y_j\lambda_{ik}\Delta_{ik}+\lambda_{ik}y_i\lambda_{jk}\Delta_{jk}+(\lambda_{jk}-\lambda_{ik})y_k\lambda_{ij}\Delta_{ij},$$
that is,
$$r=\lambda_{jk}\lambda_{ik}(-y_j\Delta_{ik}+ y_i \Delta_{jk}) +(\lambda_{jk}-\lambda_{ik})y_k\lambda_{ij}\Delta_{ij}.$$
Using the syzygy among minors  
$$y_i\Delta_{jk}-y_j\Delta_{ik}+y_k\Delta_{ij}=0$$ 
we have 
$$r=\lambda_{jk}\lambda_{ik}(-y_k\Delta_{ij}) +(\lambda_{jk}-\lambda_{ik})y_k\lambda_{ij}\Delta_{ij}=
(-\lambda_{jk}\lambda_{ik} +\lambda_{jk}\lambda_{ij}-\lambda_{ik}\lambda_{ij})y_k\Delta_{ij}$$
and 
$$-\lambda_{jk}\lambda_{ik} +\lambda_{jk}\lambda_{ij}-\lambda_{ik}\lambda_{ij}=0,$$ which can be checked by direct computation. Hence $r=0$ and
\begin{equation}\label{eq*} S(F_{ik}, F_{jk})=-\lambda_{jk}y_jF_{ik}+\lambda_{ik}y_iF_{jk}+
(\lambda_{jk}-\lambda_{ik})y_kF_{ij}.\end{equation}

Now  we return to the ideal $J_G$ and its image under $\phi$:
$$\phi(J_G)=(  F_{ij} : \{i,j\} \mbox{ is an edge of } G ).$$
Set 
$$F=\{  y_{a}F_{ij} :   i, a_1,  \cdots,  a_v, j \mbox{  is a path in } G\},$$ 
where
$$y_{a}=y_{a_1}\cdots y_{a_v}.$$
It suffices to  prove  that $F$  is a Gr\"obner basis for $\phi(J_G)$, for every $\phi$ such that $\alpha_j\neq \alpha_i$ for $i\neq j$. We first observe that $F\subseteq\phi(J_G)$, i.e., $y_{a}F_{ij} \in \phi(J_G)$ for every path $i, a_1,\ldots,a_v,j$ in $G$. Since $F_{ij}$ and $\phi(\Delta_{ij})$ differ only by a non-zero scalar, we may as well prove that $y_{a}\phi(\Delta_{ij}) \in \phi(J_G)$ for every path $i,a_1,\ldots,a_v,j$ in $G$. 
This is proved easily by induction on $v$, the case $v=0$ being trivial, applying to the matrix $\phi(X)$ the relation 
$$(z_{1i}, z_{2i})\Delta_{jk}(Z)\subseteq(\Delta_{ij}(Z),\Delta_{ik}(Z))$$
that holds for every $2\times n$ matrix $Z=(z_{ij})$ and every triplet of column indices $i,j,k$. 
In order to prove that $F$ is a Gr\"obner basis, we take two elements $y_{a}F_{ij}$ and $y_{b}F_{hk}$ in $F$ and prove that the corresponding $S$-polynomial reduces to $0$ via $F$. Here $a=a_1,\ldots,a_v$ and $b=b_1\ldots, b_r$ and $i,a,j$ and $h,b,k$ are  paths  in $G$.   We distinguish three cases: 
 
Case 1). If $\{i,j\}=\{h,k\}$, we may assume $i=h$ and $j=k$. The corresponding $S$-polynomial is $0$. 
 
Case 2). If $\{i,j\}\cap \{h,k\}=\emptyset$. Let $u=\GCD(y_a, y_b)$. Then $y_aF_{ij}=u (y_a/u)F_{ij}$ and $y_bF_{hk}=u (y_b/u)F_{hk}$. Notice that $(y_a/u)F_{ij}$ and $(y_b/u)F_{hk}$ have coprime leading terms, hence they form a Gr\"obner basis. If a Gr\"obner basis is multiplied with a single polynomial, the resulting set of polynomials is still a Gr\"obner basis. Hence $\{y_aF_{ij}, y_bF_{hk}\}$ is a Gr\"obner basis and the $S$-polynomial of $y_aF_{ij}, y_bF_{hk}$ reduces to $0$ using only $y_aF_{ij}, y_bF_{hk}$. 
 
Case 3). If $|\{i,j\}\cap\{h,k\}|=1$. Up to permuting the columns of $X$, we may assume that $i=1$, $h=2$ and $j=k=n$. 
Let $u=\LCM(y_a,y_b)$. We have
$$S(y_aF_{1n}, y_bF_{2n})=uS(F_{1n}, F_{2n}).$$
Using~(\ref{eq*}) with $i=1$, $j=2$ and $k=n$, and multiplying both sides by $u$, we obtain
\begin{equation}\label{eq**}\quad S(y_aF_{1n}, y_bF_{2n})=-\lambda_{2n}y_2uF_{1n}+\lambda_{1n}y_1uF_{2n}+
(\lambda_{2n}-\lambda_{1n})y_nuF_{12}.\end{equation}
Since~(\ref{eq*}) is  a division with reminder $0$ of  $S(F_{1n}, F_{2n})$ with respect to $F_{1n}, F_{2n}, F_{12}$, we may conclude that~(\ref{eq**}) is a division with reminder $0$ of $S(y_aF_{1n}, y_bF_{2n})$ with respect to the set $F$, provided that $y_2uF_{1n}, y_1uF_{2n}$ and $y_nuF_{12}$ are multiples of elements of $F$.  
Clearly $y_2uF_{1n}$ is a monomial multiple of $y_aF_{1n}$ and $y_1uF_{2n}$ is a monomial multiple of $y_bF_{1n}$. So we are left with $y_nuF_{12}$. 
If $u$ is divisible by a monomial $y_d=y_{d_1}\cdots y_{d_t}$ such that $1,d_1,\dots,d_t,2$ is a path in $G$, then $y_nuF_{12}$ is a multiple of $y_dF_{12}\in F$. On the other hand, if $u$ is not divisible by a monomial $y_d=y_{d_1}\cdots, y_{d_t}$ such that $1,d_1,\dots,d_t,2$ is a path in $G$, then 
$$\{1,a_1,\dots, a_v\} \cap \{2,b_1,\dots, b_r\} =\emptyset \mbox{ and } u=y_ay_b.$$  
In this case, $1,a,n,b,2$ is a path from $1$ to $2$ in $G$, hence $y_nuF_{12}=y_ny_ay_bF_{12}\in F$. 

This concludes the proof that the set $F$ is a Gr\"obner basis. The rest of the statement now follows from Proposition \ref{oldprop}. 
\end{proof}

\section{Multigraded closures of linear spaces} 
\label{ClosureCS}

We now return to the notation of Section \ref{multidegreeStructure}, in particular we let
$S=K[x_{ij}\mid 1\leq j\leq n,\ 0\leq i\leq m_j]$ with the standard $\ZZ^n$-grading induced by $\deg(x_{ij})=e_j$. 

Let $T=K[x_{ij}\mid 1\leq j\leq n,\ 1\leq i \leq m_j]\subseteq S$. 
Given a non-zero polynomial $f\in T$ we use the variables $x_{01},x_{02},\dots, x_{0n}$ 
to transform $f$ into a polynomial of $S$ which is $\ZZ^n$-graded in a ``minimal" way. 
Explicitly, let $f=\sum_{i=1}^r \lambda_i  w_i \in T\setminus 0$ where  $\lambda_i \in K\setminus\{0\}$ and $w_i$ is a monomial of degree $b_i=(b_{i1}, \dots, b_{in}) \in \ZZ^n$. Let $d=(d_1,\dotsm d_n)$ with $d_j=\max\{ b_{1j}, \dots, b_{rj} \}$. Then  the $\ZZ^n$-homogenization $f^{\hom}\in S$ of $f$ is defined as
$$f^{\hom}=\sum_{i=1}^r  \lambda_i   \left( \prod_{j=1}^n x_{0j}^{d_j-b_{ij}} \right) w_i.$$ 
Notice that $f^{\hom}$ is $\ZZ^n$-homogeneous of degree $d\in \ZZ^n$. 

Given an ideal $I\subseteq T$, its multigraded homogenization is the $\ZZ^n$-graded ideal of $S$ 
$$I^{\hom}=( f^{\hom} : f\in I\setminus 0 )\subseteq S.$$
Geometrically $I^{\hom}$ corresponds to the closure in $\PP^{(m_1,\dots, m_n)}$ of the affine variety defined by $I$. 
We denote by $I^\star$ the largest $\ZZ^n$-graded ideal of $T$ contained in $I$, i.e., the ideal generated by the $\ZZ^n$-graded elements of $I$. 

\begin{thm}[\cite{CDG3}, Theorem 3.1]\label{thm:homog}
Let $J$ be an ideal of $T$ generated by homogeneous polynomials of degree $1$ with respect to the $\ZZ$-graded structure. Then $J^{\hom}$ and $J^\star$ are Cartwright-Sturmfels ideals.   
\end{thm}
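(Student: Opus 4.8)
The plan is to reduce the statement to the classical case of a single projective space, where the analogous result is known, and then use the stability properties of Cartwright-Sturmfels ideals collected in Proposition~\ref{closures}. The key observation is that a linear ideal $J\subseteq T$, after homogenization in $S$, becomes a multigraded ideal whose multigraded Hilbert series should coincide with that of the ideal of $2$-minors of a suitable matrix of linear forms, and the latter is Cartwright-Sturmfels by Theorem~\ref{detA}. Concretely, if $J=(L_1,\dots,L_s)$ with each $L_k$ linear (so of the form $\sum_{i,j}c_{ij}^{(k)}x_{ij}+c^{(k)}$ with $c_{ij}^{(k)},c^{(k)}\in K$), then the homogenization $L_k^{\hom}$ is a $\ZZ^n$-graded linear form in $S$ of some multidegree $d^{(k)}\leq(1,\dots,1)$. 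Up to introducing the extra variables $x_{0j}$ in a controlled way, the ideal $J^{\hom}$ is obtained from a larger linear ideal by the operations of saturation/colon and intersection with a polynomial subring, both of which preserve $\CS$ by Proposition~\ref{closures}.

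The first step I would carry out is to realize $J^{\hom}$ as a multigraded linear section, or a colon ideal, of an ideal of $2$-minors. The natural device: consider the $2\times(n+1)$-type construction, or more simply the matrix
$$A=\begin{pmatrix} x_{01}&x_{02}&\cdots&x_{0n}&1\\ \ell_1&\ell_2&\cdots&\ell_n&\ell_0\end{pmatrix}$$
where the $\ell_j$ are generic $\ZZ^n$-graded linear forms adapted to the generators of $J$; the $2$-minors of a matrix of $\ZZ^n$-graded linear forms generate a $\CS$ ideal by Theorem~\ref{detA} (the case $t=2$), and $J^{\hom}$ is cut out from this by setting appropriate linear forms to zero and possibly passing to a colon by a variable $x_{0j}$. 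By Proposition~\ref{closures}(1)--(3), each such operation keeps us inside $\CS(S)$, which gives the claim for $J^{\hom}$. The statement for $J^\star$ then follows because $J^\star$ is itself a multigraded linear ideal of $T$ (it is generated by the $\ZZ^n$-graded elements of $J$, which are $K$-linear combinations of the $x_{ij}$ with no constant term) whose homogenization equals $J^\star$ up to the same colon/section manipulations; alternatively one checks directly that $J^\star$ and a suitable multigraded linear ideal of $S$, intersected with $T$, agree, and again invoke Proposition~\ref{closures}(3).

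The main obstacle I anticipate is the bookkeeping needed to match the multigraded Hilbert series: one must check that homogenizing the generators of $J$ does not lose linear relations, i.e.\ that $J^{\hom}$ is genuinely the homogenization ideal of $J$ and not just the ideal generated by the homogenized generators, and that the extra variables $x_{0j}$ are incorporated with the correct multidegrees so that the resulting ideal sits as a \emph{multigraded} (not merely graded) linear section of a determinantal $\CS$ ideal. In the $\ZZ$-graded single-projective-space case this is the familiar statement that a linear space has a $\CS$ (indeed, linearly generated) homogeneous ideal, and the multigraded refinement is exactly where the matrix-of-linear-forms description and Theorem~\ref{detA} do the work. Once the identification of $J^{\hom}$ as an iterated colon/section of $I_2$ of a matrix of $\ZZ^n$-graded linear forms is in place, the rest is a formal application of Proposition~\ref{closures}, and the same chain of operations handles $J^\star$.
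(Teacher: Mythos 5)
Your guiding idea --- realize $J^{\hom}$ as the output of the $\CS$-preserving operations of Proposition~\ref{closures} applied to a determinantal ideal covered by Theorem~\ref{detA} --- is a sensible starting point, and the theorem is indeed close in spirit to the determinantal results (compare the example in Section~\ref{ClosureCS}, where $J^\star=I_n(X)$ for a matrix of variables $X$). The problem is that the reduction is never carried out, and what you set aside as ``bookkeeping'' is the whole content of the theorem. The matrix $A$ you display does not meet the hypotheses of Theorem~\ref{detA}, which requires every entry in column $j$ to have multidegree exactly $e_j$: the entry $1$ has degree $0$, and the homogenized generators of $J$ have multidegrees $\leq(1,\dots,1)$ rather than $e_j$; moreover you never specify the forms $\ell_0,\dots,\ell_n$ when $J$ has $s$ generators with $s\neq n$. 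More fundamentally, $J^{\hom}$ is generated by the homogenizations of \emph{all} elements of $J$, not of a chosen generating set, and producing an explicit chain of colons and sections taking $I_2$ of an honest degree-$e_j$ matrix to exactly this ideal is precisely what a proof must supply; the sentence ``$J^{\hom}$ is cut out from this by setting appropriate linear forms to zero and possibly passing to a colon by a variable'' asserts the conclusion rather than establishing it. (A small slip besides: the generators of $J$ are $\ZZ$-homogeneous of degree $1$, so there is no constant term $c^{(k)}$.)

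The argument for $J^\star$ fails outright. You claim $J^\star$ is a multigraded \emph{linear} ideal because the $\ZZ^n$-graded elements of $J$ are $K$-linear combinations of the $x_{ij}$. But $J^\star$ is generated by the $\ZZ^n$-graded elements of $J$ of \emph{every} degree, and these are typically not linear: in the example of Section~\ref{ClosureCS}, the ideal $J=(x_{i1}+x_{i2}+x_{i3} : i=1,2,3)$ contains no nonzero $\ZZ^3$-graded linear form, yet $J^\star=I_3(X)$ is generated by cubics. So the premise on which your deduction for $J^\star$ rests is false, and that half of the statement is left unproved. A correct treatment must either handle $J^\star$ directly or derive it from $J^{\hom}$ by further applications of Proposition~\ref{closures} (e.g.\ via the specialization $x_{0j}\mapsto 0$), and in either case identifying the resulting ideal with $J^\star$ is again a nontrivial step that cannot be waved away.
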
 

\begin{rmk} Theorem \ref{thm:homog} was inspired by work of Ardila and Boocher. In their paper \cite{AB}, they consider the situation $m_1=\ldots=m_n=1$. Our result recovers and generalises some of their results. Indeed the case treated by Ardila and Boocher is special, in the sense that the ideal $J^{\hom}$ is not only a Cartwright-Sturmfels ideal but also  Cartwright-Sturmfels$^*$, a dual notion that is discussed in \cite{CDG4}. One important consequence of this fact is that the multigraded Betti numbers of $J$ equal the multigraded Betti numbers of any $\ZZ^n$-graded ideal with the same multigraded Hilbert function as $J$. In addition, any minimal multigraded system of generators is a universal Gr\"obner basis of $J$.
\end{rmk} 

\begin{ex} 
Let $n=3$ and $m_1=m_2=m_3=4$. We consider $J=(x_{i1}+x_{i2}+x_{i3} : i=1,2,3)$. 
With 
$$X=\left(\begin{array}{ccc}
x_{11} & x_{12} & x_{13} \\
x_{21} & x_{22} & x_{23} \\
x_{31} & x_{32} & x_{33} \\
x_{41} & x_{42} & x_{43} 
\end{array}\right)$$ 
we observe that 
$$X \left(\begin{array}{ccc}
1 \\
1 \\
1
\end{array}\right)
=0 \mod J,$$  
hence $I_3(X)\subseteq J$. Since $I_3(X)$ is $\ZZ^3$-graded we have also 
$I_3(X) \subseteq J^\star$. It turns out that actually one has $I_3(X)=J^\star$. This example can be generalised, see \cite[Example 5.2.]{CDG3} where the result is presented with the transposed graded convention, i.e.  with respect to the graded structure induced by $\deg(x_{ij})=e_i$. Summing up, one has that for every $m\geq n$ and $X=(x_{ij})$ matrix of variables with $\deg(x_{ij})=e_j$, the ideal $I_n(X)$ is equal to $J^\star$ where $J=(\sum_{j} x_{ij} : i=1,\dots, n)$. 
\end{ex} 

The ideals generated by linear forms are the only $\ZZ$-graded Cartwright-Sturmfels ideals. Hence Theorem \ref{thm:homog} could be a special instance of a more general fact, that we formulate as a question. 

\begin{quest} 
Let $I$ be a Cartwright-Sturmfels $\ZZ^n$-graded ideal of $S$. Suppose that we introduce a finer graded structure on $S$, say a $\ZZ^r$-graded structure with $r>n$  such that if two variables have the same  $\ZZ^r$-degree then they have the same $\ZZ^n$-degree. Then $I$ is not  necessarily  $\ZZ^r$-graded 
 and we may consider its $\ZZ^r$-homogenization $I^{\hom}\subseteq S[y_1,\dots, y_r]$ and homogeneous $\ZZ^r$-part $I^*$. Are $I^{\hom}$  and   $I^*$ Cartwright-Sturmfels ideals? 
 \end{quest} 


\end{document}